\documentclass[reqno,12pt,a4paper]{amsart}

\numberwithin{equation}{section}
\allowdisplaybreaks 

\usepackage[normalem]{ulem}

\usepackage{amssymb,eucal,mathrsfs,setspace,bm}
\usepackage[noadjust]{cite}

\usepackage{array}
\newcolumntype{C}{>{$}c<{$}} 

%
%
\usepackage[largesc,theoremfont]{newtxtext}      
\usepackage[libertine,cmbraces]{newtxmath} 
%
\begingroup
  \makeatletter
  \@for\theoremstyle:=definition,remark,plain\do{%
    \expandafter\g@addto@macro\csname th@\theoremstyle\endcsname{%
      \addtolength\thm@preskip{.5\baselineskip plus .2\baselineskip minus .2\baselineskip}
      \addtolength\thm@postskip{.5\baselineskip plus .2\baselineskip minus .2\baselineskip}
    }%
  }
\endgroup

\usepackage{enumitem}
\setitemize{leftmargin=*}   
\setenumerate{leftmargin=*, 
  label=\textup{(\alph*)},  
	ref=\textup{\alph*}}      

\usepackage{mathtools} 
\usepackage{graphicx}


\usepackage[colorlinks=true,citecolor=red,linkcolor=blue]{hyperref} 

\usepackage[capitalise,noabbrev]{cleveref} 







\renewcommand{\cong}{\simeq}








\DeclarePairedDelimiterX{\comm}[2]{\lbrack}{\rbrack}{#1 , #2}  
\DeclarePairedDelimiterX{\acomm}[2]{\lbrace}{\rbrace}{#1 , #2} 
\DeclarePairedDelimiterX{\inner}[2]{\langle}{\rangle}{#1 , #2} 
\DeclarePairedDelimiterX{\super}[2]{\lparen}{\rparen}{#1 \delimsize\vert \mathopen{} #2} 

\newcommand{\ra}{\rightarrow}




\newcommand{\fld}[1]{\mathbb{#1}}    
\newcommand{\alg}[1]{\mathfrak{#1}}  

\newcommand{\ZZ}{\fld{Z}}
\newcommand{\QQ}{\fld{Q}}

\newcommand{\CC}{\fld{C}}



\newcommand{\ah}{\alg{h}}

\theoremstyle{plain}
\newtheorem{theorem}{Theorem}
\newtheorem{corollary}[theorem]{Corollary}
\newtheorem{lemma}[theorem]{Lemma}
\newtheorem{proposition}[theorem]{Proposition}
\newtheorem{remark}[theorem]{Remark}




\usepackage[all]{xy}

\newcommand{\lam}{\lambda}

\newcommand{\ch}{\mathrm{ch}}

\newcommand{\hh}{\widehat{\mathfrak{h}}}


\newcommand{\an}{\alg{n}}

\newcommand{\Znn}{\mathbb{Z}_{\geq0}}
\newcommand{\cspan}{\mathrm{span}_{\CC}}

\newcommand{\com}[2]{\mathrm{Com}(#1,#2)}
\newcommand{\cB}{\mathcal{B}}
\newcommand{\cC}{\mathcal{C}}


\makeatletter
\renewcommand\author@andify{%
  \nxandlist {\unskip ,\penalty-1 \space\ignorespaces}%
    {\unskip {} \@@and~}%
    {\unskip \penalty-2 \space \@@and~}%
}
\makeatother

\hyphenation{semi-simple}
\hyphenation{super-algebra}

\begin{document}

\title[]{On the commutant of the principal subalgebra in the $A_1$ lattice vertex algebra}

\author[K~Kawasetsu]{Kazuya Kawasetsu}
\address[Kazuya Kawasetsu]{
Priority Organization for Innovation and Excellence\\
Kumamoto University\\
Kumamoto, Japan, 860-8555.
}
\email{kawasetsu@kumamoto-u.ac.jp}

\begin{abstract}
The coset (commutant) construction is a fundamental tool
to construct vertex operator algebras from known vertex operator algebras.
The aim of this paper is to provide a fundamental example of the commutants of vertex algebras ouside vertex operator algebras.
Namely, the  commutant $C$ of the {\em principal
subalgebra} $W$ of the $A_1$ lattice vertex operator algebra $V_{A_1}$ is investigated.
An explicit minimal set of generators of $C$, which consists of infinitely many elements and strongly generates $C$,
is introduced.
It implies that the algebra $C$ is not finitely generated.
Furthermore, Zhu's  Poisson algebra of $C$ is shown to be isomorphic to an infinite-dimensional algebra $\CC[x_1,x_2,\ldots]/(x_ix_j\,|\,i,j=1,2,\ldots)$.
In particular, the associated variety of $C$ consists of a point.
Moreover, $W$ and $C$ are verified to form a dual pair in $V_{A_1}$.

\end{abstract}

\maketitle

\onehalfspacing

\section{Introduction} \label{sec:intro}

The {\em coset}  construction is a fundamental tool to construct
 vertex operator algebras (VOAs) from known VOAs, since Goddard-Kent-Olive (GKO) construction of 2d rational conformal field theory (CFT).
 The cosets in 2d rational CFT are formulated as the {\em commutants} of VOAs in the theory of VOAs.
 Actually, the commutants are defined for more general {\em vertex algebras}, although
   there are no 
 detailed studies on the commutants of specific non-conformal vertex algebras to the best of our knowledge.
 The purpose of this paper is to establish a fundamental example
and make the situation clear to some extent.
 Namely, we study the commutant of the {\em principal subalgebra} of the $A_1$ lattice VOA, which is probably one of the most fundamental non-conformal vertex algebras.
Here, we call a vertex algebra $V$ non-conformal if there exist no 
Virasoro vectors $\omega$ in $V$ satisfying $L_{-1}=\partial$,
where $\partial$ is the derivation of $V$ and $L_{n}=\mathrm{Res}_z z^{n+1} Y(\omega,z)dz$. (In particular, non-conformal
vertex algebras are never VOAs.)

Let us recall the definition of commutants.
Let $V$ be a vertex algebra and $U$ a vertex subalgebra of $V$.
The {\em commutant} of $U$ inside $V$ is the 
vertex subalgebra
$
\com U V=\{v\in V\,|\, [u(n),v(m)]=0\ (u\in U, n,m\in\ZZ)\}
$
of $V$. 
In this paper, we investigate the commutant for the pair $W\subset V_{A_1}$, where the ambient algebra $V_{A_1}$ is the $A_1$ lattice VOA $V_{A_1}$ generated by two vertex operators $e^{\pm\alpha}(z)$,
and the subalgebra $W=\langle e^\alpha\rangle$ is the principal subalgebra of $V_{A_1}$ \cite{SF,MP} generated by the single vertex operator $e^\alpha(z)$. Let us set $C=\com W{V_{A_1}}$.

We first note some differences from usual situations.
The vertex subalgebra $W$ is neither conformal nor simple although $V_{A_1}$ is a simple VOA as usual.
Moreover, since $W$ is a commutative vertex algebra,
we have the inclusion $W\subset C$ and in particular,
the intersection $W\cap C$ is non-trivial.
In this sense, $C$ is no longer  a ``coset''.

Nevertheless, as usual, $W$ and $C$ are non-negatively graded by hamiltonian with finite-dimensional homogeneous pieces, 
since they are closed under the hamiltonian operator $L_0$  of $V_{A_1}$.
(In fact, they are closed under $L_n$ with $n\geq -1$, hence quasiconformal.)
In particular, we may define the $q$-characters of $W$ and $C$ as the graded traces of $L_0$. The characters, if normalized appropriately, are somehow modular functions of weight 0 on a congruence subgroup of $SL_2(\ZZ)$
(see \Cref{remchar}), like 
2d rational CFT.

As a main theorem, we give an explicit minimal set of generators of $C$, which strongly generates $C$ and consists of infinitely many generators.
We find the generators by applying the following simple realization of $C$.
Consider the dual lattice $A_1^\circ=\ZZ\varpi$ of $A_1$ with $\varpi=\alpha/2$.
The vertex operators $e^{\pm \varpi}(z)$ are formulated as a kind of parafermionic fields (those with non-necessarily integral powers of $z$) and generate the lattice {\em generalized vertex algebra} (GVA) $V_{A_1^\circ}$ \cite{DL}.
The {\em generalized principal subalgebra} $W^\circ=\langle e^\varpi\rangle$ is the sub\,GVA of $V_{A_1^\circ}$ generated by $e^\varpi(z)$ alone and
we have the realization $C=W^\circ\cap V_{A_1}$ by \cite{Kaw15}.
In particular, it follows that $C$ is generated by the  states 
$\Phi^\circ(n,m)=e^\varpi(-m-3/2)e^\varpi(-n-1)\bm1\in C$ with $m\geq n\geq0$ (see \Cref{prop:basisnew}).
The extremal vectors $\phi_n=\Phi^\circ(n,n)$ with $n\geq0$ generate $C$ minimally and strongly (\Cref{thm:genc}).

The minimality of the generators clearly implies that $C$ is not finitely generated  (\Cref{thm:notfin})
although $W$ and $V_{A_1}$ are {\em strongly} finitely generated. 
To the best of our knowledge, it is the first example of a non-finitely generated vertex subalgebra in $V_{A_1}$, although it may not be the easiest example: e.g., we can apply a similar method 
to show that $W\cap V_{2A_1}$ is not finitely generated.

The generators also allow us to determine {\em Zhu's Poisson algebra} $R_C$ of $C$ \cite{ZhuMod96}.
It relates vertex algebras to the Poisson geometry and other theories.
We show that $R_C$ is isomorphic to a quotient of the polynomial algebra of infinite variables:
$
R_C\cong \CC[x_1,x_2,\ldots]/(x_ix_j\,|\,i,j=1,2,\ldots),
$
with the trivial Poisson structure (\Cref{corrc}).
It follows that the {\em associated variety} $X_C=\mathrm{Specm}(R_C)$ is a singleton (\Cref{corxc}).
It may remind us that the {\em $C_2$-cofiniteness} condition \cite{ZhuMod96} is equivalent to  
finite strong generation and that the associated variety is a point
\cite{A12}.

Furthermore, $W$ and $C$ are verified to form a dual pair in $V_{A_1}$, that is, 
the commutant of $C$ coincides with $W$ 
 (\Cref{mainthm2}).

Recall that the $C_2$-cofiniteness of VOAs implies a modular invariance property of characters \cite{ZhuMod96,Miy} and  has been preserved under taking commutants. 
This preservation does not hold in our non-conformal setting:  $W$ and $V_{A_1}$ are $C_2$-cofinite but $C$ is not as $R_C$ is infinite-dimensional.
Note  that $C$ is not even quasi-lisse in the sense of \cite{AK}, a stage for a related modular invariance, since $C$ is not finitely generated.

We note that the principal subalgebras, or more generally, principal subspaces, are of independent interest and have been studied extensively. See e.g., \cite{CLM,CalLM,Geog,AKS,CoLM,FFJMM,BKP}, to name but a few.

Despite that $C$ is not a ``coset'' of $W$ in the sense that $C$ 
 intersects $W$ non-trivially, we still oddly have a demposition of the character of $V_{A_1}$ with respect to those of $W$-modules and restricted duals of $C$-modules,
 which 
 readily follows from \cite{BFL}, a combinatorial description of $V_{A_1}$ 
 related to their {\em Urod VOAs}, and the character formula
 of $C$-modules.
We hope to discuss this aspect in a forthcoming paper.

 We also note  that quotients of the restricted dual space $C^\vee$ 
 coincide with the level one $\widehat{sl}_2$ spaces of coinvariants 
 $L_{1,0}^{(N)}(\an)$ in \cite{FKLMM}. 
We hope to come back to this point in a forthcoming paper as well.

This paper is organized as follows.
In \Cref{subsecpre}, we recall the lattice VOA and GVA $V_{A_1}$ and  $V_{A_1^\circ}$, as well as the principal subalgebra $W$ and 
generalized one $W^\circ$.
In \Cref{subsecbi}-\Cref{subsecformula}, we introduce
bigradings and recall formulas for $V_{A_1^\circ}$, consequently for our subalgebras of $V_{A_1^\circ}$.
The commutant $C$ of $W$ is introduced in \Cref{subseccommutant}.
In \Cref{secgen}, we introduce the minimal set of (strong) generators of $C$
assuming some key propositions shown in \Cref{seckey}.
 In \Cref{secdual}, we verify that $(W,C)$ is a dual pair.
We determine $R_C$ and $X_C$ in \Cref{secrems} with concluding remarks.

\subsection*{Notations.}
All vector spaces in this paper are over the field of complex numbers $\CC$.
The set of integers is denoted by $\ZZ$.

\subsection*{Acknowledgements}

We thank Prof.~Atsushi Matsuo for helpful advice and careful reading of manuscripts. We also thank Profs.~Toshiyuki Abe, Masahiko Miyamoto and Hiroki Shimakura for fruitful advice and discussions.
This research is partially supported by
MEXT Japan ``Leading Initiative for Excellent Young Researchers (LEADER)'',
JSPS Kakenhi Grant numbers 19KK0065, 21K13775 and 21H04993.

\section{Preliminaries on the commutant $C$}\label{secpre}

\subsection{Lattice generalized vertex algebras and generalized principal subalgebras} \label{subsecpre}
We use the same notations as in \cite{Kaw15}.
In the following, we recall some notations necessary to this paper.

Let $A_1=\ZZ \alpha$ be the $A_1$ lattice with $(\alpha,\alpha)=2$
and $A_1^\circ=\ZZ\varpi$ the dual lattice of $A_1$ with $\varpi=\alpha/2$.
In particular, $(\varpi,\varpi)=1/2$.
We consider the abelian Lie algebra $\ah=\CC\otimes_{\ZZ} A_1$
with the induced bilinear form and the Heisenberg Lie algebra
$\hh=\ah[t,t^{-1}]\oplus \CC K$ with the central element $K$ and 
the Lie bracket $[h_1(n_1),h_2(n_2)]=n_1(h_1,h_2)\delta_{n_1+n_2,0}K$.
Here, we denote $ht^n=h(n)$ for any $h\in\ah$ and $n\in\ZZ$.
Let $\lam$ be an element of $A_1^\circ$. 
The bosonic Fock space is the induced module
$M(1,\lam)=U(\hh)\otimes_{U(\ah[t]\oplus \CC K)}\CC e^\lam$  generated by the highest weight vector $1\otimes e^\lam$,
where $\CC e^\lam$ is the one-dimensional module defined by $Ke^\lam=e^\lam$, $h(0)e^\lam=(h,\lam)e^\lam$
and $h(n)e^\lam=0$ for all $n\geq 1$.
We often simply write $e^\lam=1\otimes e^\lam\in M(1,\lam)$.


Let  $\beta$ be an element of $A_1^\circ$. We consider the {\em vertex operator}
$$
e^\beta(z)=\exp\left(\sum_{n\in\ZZ_{<0}}\frac{\beta(n)}{-n}z^{-n}\right)
\exp\left(\sum_{n\in\ZZ_{>0}}\frac{\beta(n)}{-n}z^{-n}\right)
 \otimes z^{\beta(0)}e_{\beta},
$$
acting on the Fock spaces.
It sends the space $M(1,\lam)$ into $M(1,\lam+\beta)[[z^{1/2},z^{-1/2}]]$ 
for any $\lam\in A_1^\circ$.
Here, $z^{\beta(0)} e^\lam=z^{(\beta,\lam)}e^\lam$ and $e_\beta e^\lam=e^{\beta+\lam}$.
We write the coefficients of $e^\beta(z)$ as $e^\beta(z)=\sum_{n\in\frac12\ZZ}e^\beta(n)z^{-n-1}$.

  
The fields $e^{\pm \alpha}(z)$ generate the lattice VOA $V_{A_1}$. 
The underlying vector space is
$$
V_{A_1}=\bigoplus_{n\in\ZZ}M(1,n\alpha),
$$
with the vacuum vector $\bm 1=e^0$ and the state-field correspondence $Y(\cdot,z): V_{A_1}\ra \mathrm{End}(V_{A_1})[[z,z^{-1}]]$ for $V_{A_1}$ is determined by 
$$
Y(\varpi(-1)\bm1,z)=\sum_{n\in\ZZ}\varpi(n)z^{-n-1},\qquad
Y(e^\beta,z)=e^\beta(z)\qquad (\beta\in A_1).
$$

The field $e^\alpha(z)$ alone generates the lattice principal subalgebra
$$
W=\langle e^\alpha\rangle\subset V_{A_1}.
$$
We have the following combinatorial basis of $W$:
the set 
$$
\cB=\{\Phi(n_1,\ldots,n_r)\,|\,n_r\geq \cdots \geq n_1\geq 0\}\subset W
$$
consisting of monomials
\begin{align*}
&\Phi(n_1,\ldots,n_r)=e^\alpha(-n_r-2r-1)e^\alpha(-n_{r-1}-2r+1)\cdots 
e^\alpha(-n_2-3)e^\alpha(-n_1-1)\bm1
\end{align*}
is a $\CC$-basis of $W$ \cite{SF}.

Historically, the space $W$ was first introduced in \cite{SF} as 
a subspace of the level one basic vacuum module  over the affine Kac-Moody algebra $A_1^{(1)}$ 
among other subspaces of standard modules ({\em principal subspaces}) and brought to the theory of vertex algebras in \cite{CLM}.
The lattice analogue of principal subspaces is introduced in \cite{MP} and the equivalent notion of free vertex algebras is studied in \cite{R}.

Similarly, the fields $e^{\pm \varpi}(z)$ generate the lattice GVA $V_{A_1^\circ}$. The differences are that the underlying vector space is
$$
V_{A_1^\circ}=\bigoplus_{n\in\ZZ}M(1,n\varpi),
$$
which includes $V_{A_1}$ as a vertex subalgebra, 
and that the state-field correspondence $Y(\cdot,z)$ for $V_{A_1^\circ}$
takes values in $\mathrm{End}(V_{A_1^\circ})[[z^{1/2},z^{-1/2}]]$, so that 
$V_{A_1^\circ}$ is not a vertex algebra but a GVA.

The field $e^\varpi(z)$ alone generates the lattice generalized principal subalgebra
$$
W^\circ=\langle e^\varpi\rangle\subset V_{A_1^\circ}.
$$
The set \cite{Kaw15}
$$
\cB^\circ=\{\Phi^\circ(n_1,\ldots,n_r)\,|\,n_r\geq \cdots \geq n_1\geq 0\}\subset W^\circ
$$
is a $\CC$-basis of $W^\circ$, where
\begin{align*}
&\Phi^\circ(n_1,\ldots,n_r)=e^\varpi(-n_r-(r+1)/2)e^\varpi(-n_{r-1}-r/2)\cdots 
e^\varpi(-n_2-3/2)e^\varpi(-n_1-1)\bm1.
\end{align*}

\subsection{Bigradings}\label{subsecbi}
We have the hamiltonian linear operator $H:V_{A_1^\circ}\ra V_{A_1^\circ}$ defined by  
$H (e^{\beta})=(\beta,\beta)/2$ for any $\beta\in A_1^\circ$
and the commutation relation $[H,\varpi(n)]=-n\varpi(n)$ for
all  $n\in\ZZ$.
 A non-zero vector $v$ is said to be a homogeneous vector of  {\em conformal weight} $\Delta_v\in\CC$ if $Hv=\Delta_v v$.
The conformal weight of the monomial
$$
v=\varpi(-n_1-1)\cdots \varpi(-n_s-1)e^{\beta} \quad 
(\beta\in  A_1^\circ, s\geq 0,  n_1,\ldots,n_s\in\ZZ).
$$
is $(\beta,\beta)/2+n_1+\cdots+n_s$.
Note that for any homogeneous $v\in V$ of conformal weight $\Delta_v$ and $n\in\CC$, we have the commutation relation
$$
[H,v(n)]=(\Delta_v-n-1)v(n).
$$
Here and after,  the $n$-th mode $v(n)$ is defined by
$Y(v,z)=\sum_{n\in\frac12\ZZ} v(n)z^{-n-1}$ for any element $v$.
For any vector subspace $M\subset V_{A_1^\circ}$,
we set $M_\Delta=\{v\in M\,|\, Hv=\Delta v\}$, the homogeneous space of conformal weight $\Delta$.

The non-zero elements of $M(1,\beta)$ is said to have {\em charge} $\beta$.
For any vector subspace $M\subset V_{A_1^\circ}$,
we write the homogeneous space of charge $\beta$ as
 $M^\beta=\{0\}\cup \{v\in M\,|\,v \mbox{ has charge }\beta\}$.
 In particular, $(V_{A_1^\circ})^\beta=M(1,\beta)$.

We say that a vector subspace $M\subset V_{A_1^\circ}$
is bigraded if $M=\bigoplus_{\beta\in A_1^\circ,\Delta\in \QQ} M^\beta_\Delta$ with $M^\beta_\Delta=M^\beta\cap M_\Delta$.

The vertex subalgebra $V_{A_1}$ of $V_{A_1^\circ}$ is bigraded with non-negative conformal weights:
$V_{A_1}=\bigoplus_{r\in\ZZ,n\in\Znn} (V_{A_1})^{r\alpha}_n$.
The vectors of conformal weight 0 are precisely the 
non-zero  scalar multiples of $\bm1$.

We have the inclusion $V_{A_1}\subset V_{A_1^\circ}$ and the direct sum decomposition $V_{A_1^\circ}=V_{A_1}\oplus V_{A_1+\varpi}$
as $V_{A_1}$-modules.
Here, we set $V_{A_1+\varpi}=\bigoplus_{r\in\ZZ}M(1,r\alpha+\varpi)$.
The conformal weights of $V_{A_1+\varpi}$ run over $\frac14+\Znn$.
The vectors of conformal weight $1/4$ are the linear sums of $e^{\pm\varpi}$ except for 0.
As a result, the whole space $V_{A_1^\circ}$ is bigraded:
$$
V_{A_1^\circ}=\bigoplus_{\substack{r\in\ZZ,\\ \Delta\in \Znn\cup(\frac14+\Znn)}} (V_{A_1^\circ})^{r\varpi}_\Delta,
$$

Since the elements of the bases $\cB$ and $\cB^\circ$ of $W$ and $W^\circ$  are bi-homogeneous, it follows that $W$ and $W^\circ$ are bigraded subalgebras of $V_{A_1^\circ}$:
$$
W=\bigoplus_{r,\Delta= 0}^\infty W^{r\alpha}_\Delta,
\qquad
W^\circ=\bigoplus_{\substack{r\in\Znn, \\ \Delta\in \Znn\cup(\frac14+\Znn)}} (W^\circ)^{r\varpi}_\Delta,
$$
Note that only non-negative  $r$ appear in the summation.

\subsection{Basic formulas}\label{subsecformula}
  
Let us write $V=V_{A_1^\circ}$.
In this Subsection, we collect basic necessary formulas for $V$.
For $\beta,\gamma\in A_1^\circ$, we set $\Delta(\beta,\gamma)=-(\beta,\gamma)+\ZZ\in \QQ/\ZZ$ and $\eta(\beta,\gamma)=e^{\pi\sqrt{-1}(\beta,\gamma)}\in \CC^\times$.
Notice that $\Delta(A_1,A_1^\circ)=\Delta(A_1^\circ,A_1)=\{\ZZ\}$ and $\Delta(A_1+\varpi,A_1+\varpi)=\{1/2+\ZZ\}$.

Let $\beta,\gamma,\delta$ be elements of $A_1^\circ$.
We have the {\em Borcherds identity}: for any $b\in V^\beta, c\in V^\gamma,d\in V^\delta$  and
$n\in \Delta(\beta,\gamma), k\in \Delta(\gamma,\delta), m\in \Delta(\beta,\delta)$,
\begin{align}\label{borcherds}
&\sum_{j\geq 0}\binom m j (b(n+j)c)(m+k-j)d\nonumber\\
&\qquad=\sum_{j\geq 0}(-1)^j\binom nj \Bigl(b(m+n-j)c(k+j)d-
\eta(\beta,\gamma)e^{\pi \sqrt{-1}n}c(n+k-j)b(m+j)d\Bigr).
\end{align}
The {\em derivation} (or translation operator) of $V$ is $\partial:V\ra V$ defined by $\partial a=a(-2)\bm1$ for all $a\in V$.
We have the {\em translation covariance} $[\partial,Y(a,z)]=\partial_zY(a,z)$ and $\partial\bm1=0$. 
In particular, we have $a(-n-1)\bm1=\frac1{n!}\partial^na$ 
and
\begin{equation}\label{derivnm}
(a(-n-1)\bm1)(m)=\binom{-m+n-1}{\ell}a(m-n),
\end{equation}
($n\in\ZZ_{\geq0}, m\in\CC$).

 Consider vectors $b\in V^\beta, c\in V^\gamma,d\in V^\delta$. 
 If $\Delta(\beta,\delta)=\ZZ$, that is, if at least one of $\beta,\delta$ belongs to $A_1$, we may substitute $m=0$ to 
 Borcherds identity \eqref{borcherds} and it results in
 the {\em associativity formula}
\begin{align}\label{associativity}
(b(n)c)(k)d
=\sum_{j\geq 0}(-1)^j\binom nj \Bigl(b(n-j)c(k+j)d-
\eta(\beta,\gamma)e^{\pi \sqrt{-1}n}c(n+k-j)b(j)d\Bigr).
\end{align}
for any
$n\in \Delta(\beta,\gamma)$ and $k\in \Delta(\gamma,\delta)$.
 
 Instead, if $\Delta(\beta,\gamma)=\ZZ$, 
 we may substitute $n=0$ to \eqref{borcherds} and obtain
 the {\em commutation relation}
\begin{align}\label{commutation}
(b(m)c(k)-\eta(\beta,\gamma)c(k)b(m))d=
\sum_{j\geq 0}\binom m j (b(j)c)(m+k-j)d
\end{align}
for any
$m\in \Delta(\beta,\delta)$ and $k\in \Delta(\gamma,\delta)$.
 
 All of the above are a kind of axioms of GVAs.
 We finally give some specific formulas for $V_{A_1^\circ}$:
\begin{align*}
&[\varpi(n),e^\beta(m)]=(\varpi,\beta)e^\beta(n+m)\quad (n\in\ZZ,m\in\frac12\ZZ),\\
&e^{\beta}(-(\beta,\gamma)-1)e^{\gamma}=e^{\beta+\gamma},\\
&e^{\beta}(-(\beta,\gamma)-1+n)e^{\gamma}=0\quad
( n\in\frac12\ZZ\setminus\ZZ_{<0}).
\end{align*}
 In particular, by using the last formula and \eqref{commutation}, we have the commutation relation 
$[e^\alpha(m),e^\alpha(n)]=0$ for all $m,n\in \ZZ$.
It implies that $W$ is a commutative vertex algebra, that is, 
$[u(m),v(n)]=0$ for all $u,v\in W$ and $m,n\in\ZZ$.
We also have the commutativity 
$e^\alpha(m)e^\varpi(k)=-e^\varpi(k)e^\alpha(m)$
 for all $m\in\ZZ$ and $k\in\frac12\ZZ$.

\subsection{The commutant $C$} \label{subseccommutant}

 Let us consider the commutant $C=\com W{V_{A_1}}$ of $W$ inside $V_{A_1}$.
 By \cite[Theorem 7.2]{Kaw15}, the invariant subspace
 $$
V_{A_1^\circ}^{W_+}:=\{v\in V_{A_1^\circ}\,|\, w(n)v=0\ (w\in W, n\geq 0)\}
$$
coincides with $W^\circ$.
It implies $C=W^\circ\cap V_{A_1}$ because of commutation relation \eqref{commutation}.
 Recall the basis elements $\Phi^\circ(n_1,\ldots,n_r)$ from \Cref{subsecpre}.
Since 
$$
\Phi^\circ(n_1,\ldots,n_r)\in 
\begin{cases}
V_{A_1}&\mbox{if $r$ is even}\\
V_{A_1+\varpi}& \mbox{if $r$ is odd},
\end{cases}
$$
we have the following $\CC$-basis $\cC$ of $C$:
\begin{equation}\label{basisc}
\cC=\{\Phi^\circ(n_1,\ldots,n_{2r})\,|\,r\in \ZZ_{\geq0}, n_{2r}\geq \cdots\geq n_1\geq 0\}.
\end{equation}
By using this basis, we see that $C$ is bigraded:
$$
C=\bigoplus_{r,\Delta= 0}^\infty C^{r\alpha}_\Delta,
$$
 and we obtain a fermionic character formula of $C$:
\begin{equation}\label{eqn:fullchar}
\ch(C)(z,q):=\sum_{r,\Delta=0}^\infty\dim(C^{r\alpha}_\Delta)q^\Delta z^r=\sum_{r=0}^\infty \frac{q^{r^2}z^{r}}{(q)_{2r}}.
\end{equation}
Here, $(q)_r=(q;q)_r=\prod_{n=1}^r(1-q^n)$ is the $q$-Pochhammer symbol.

\begin{remark}\label{remchar}
Let us consider the normalized ($q$-)character
\begin{equation}\label{eqn:qchar}
\widetilde \ch(C )(q):=q^{-1/40}\ch(C)(1,q)=q^{-1/40}\sum_{r=0}^\infty
\frac{q^{r^2}}{(q)_{2r}},
\end{equation}
where the second equality follows from \eqref{eqn:fullchar}.
By the fermionic sum formula \cite{KKMM}, 
it coincides with the character of the Virasoro minimal model 
$L(-3/5,-1/20)$. 
It is well known to be a modular form of weight 0 on $\Gamma(5)$.

The normalized character $\widetilde \ch(W)(q):=q^{-1/60}\ch(W)(1,q)$ is known to coincide with the character of $L(-22/5,-1/5)$ and is modular as well.
\end{remark}

\begin{remark}
The vertex algebras $V=W,C$ are not simple. We have
the maximal ideal $I=\bigoplus_{r= 1}^\infty V^{r\alpha}=\bigoplus_{\Delta= 1}^\infty V_\Delta$ of $V$. The 
quotient  is the trivial vertex algebra $\CC\bm 1$.
\end{remark}

\subsection{Quasiconformality}\label{sec:quasiconformality}
Recall that the Virasoro algebra 
$\mathrm{Vir}=\bigoplus_{n\in \ZZ} \CC L_n\oplus \CC C$
acts on $V_{A_1^\circ}$ via the stress-energy tensor
$$
T(z)=Y(\omega,z)=\sum_{n\in\ZZ}L_nz^{-n-2},
$$
with the Virasoro vector $\omega=\frac14 \alpha(-1)^2\bm 1$.
Note that $L_n\bm1=0$ for all $n\geq -1$.
Moreover, $L_{-1}$ and $L_0$ act as 
the derivation $\partial$ and 
  hamiltonian $H$ on $V_{A_1}^\circ$, respectively.
 The Virasoro vector $\omega$ also belongs to $V_{A_1}$ 
 and makes it into a VOA.

Since the vectors $e^\alpha$ and $e^\varpi$ are primary vectors
of $L_0$-weights 1 and $1/4$, respectively,
we have the commutation relations
$$
[L_n,e^\alpha(m)]=-me^\alpha(n+m),\quad
 [L_n,e^\varpi(m)]=\Biggl(-m-\frac34 n-\frac34\Biggr)e^\varpi(n+m).
$$
It follows that $W$ and $W^\circ$ are 
{\em quasiconformal}, which means that the positive part $\mathrm{Vir}_+=\bigoplus_{n\geq -1}\CC L_n$ of the Virasoro algebra acts on them and $L_{-1}$ coincides with $\partial$.
Since the Virasoro action preserves charges, it follows that $C$ is also quasiconformal.
Notice that being subspaces of $V_{A_1}$, both $W$ and $C$ are 
non-negatively graded by the hamiltonian $H=L_0$.

\begin{remark}
Although $V_{A_1}$ is a VOA with the Virasoro element $\omega$, the subalgebras $W$ and $C$ are non-conformal as follows.
Set  $V=W$ or $V=W^\circ$.
Suppose on the contrary that we have a Virasoro vector $\sigma\in V$ satisfying  $L_{-1}^\sigma=\partial$, where we denote
$Y(\sigma,z)=\sum_{n\in\ZZ}L_n^\sigma z^{-n-2}$. 
Since $e^\alpha\in V$ is a central element of $V$, 
we have $\partial e^\alpha=L^\sigma_{-1} e^\alpha=\sigma(0)e^\alpha=0$. 
It contradicts to $\partial e^\alpha=\alpha(-1)e^\alpha\neq 0$.
Thus, $W$ and $C$ are non-conformal.
\end{remark}

\section{A set of generators of $C$}\label{secgen}

 A subset $S\subset C$ {\em generates} $C$ if the monomials of the form
 $a_1(n_1)\cdots a_r(n_r)\bm1$ with $r\geq0$, $a_1,\ldots,a_r\in S$ and $n_1,\ldots,n_r\in \ZZ$ span $C$.
 Moreover, $S$ is said to {\em strongly} generate $C$ if the monomials with 
 $n_1,\ldots,n_r\leq -1$ already span $C$.
In this Section, we give a minimal set of (strong) generators of $C$.

Set $\phi_n:=\Phi^\circ(n,n)=e^\varpi(-n-3/2)e^\varpi(-n-1)\bm 1$  for $n\geq0$ and 
$$
S=\{\phi_n\,|\,n=0,1,2,\ldots\}\subset C^\alpha.
$$
The conformal weight of $\phi_n$ is $2n+1$ and charge is $\alpha$.
By using the standard argument, we have
$$
\phi_n=\sum_{k=0}^n\binom{1/2}k S_{n+k}(\alpha/2)S_{n-k}(\alpha/2)e^\alpha,
$$
where $S_k(x)=S_k(x(-1),x(-2),\ldots)$ is the elementary Schur polynomial defined by the formula
$$
\mathrm{exp}\left(\sum_{k<0}\frac{x(k)}{-k}z^{-k}\right)
=\sum_{k\geq 0}S_k(x)z^k.
$$
For example,
\begin{align*}
&\phi_0=e^\alpha, \quad \phi_1=\left(\frac3 {16}\alpha(-1)^2-\frac18\alpha(-2)\right)e^\alpha,\\
&\phi_2=\Bigl(-\frac1{64}\alpha(-4)-\frac5{96}\alpha(-3)\alpha(-1)
+\frac{15}{256}\alpha(-2)^2
+\frac7{256}\alpha(-2)\alpha(-1)^2\\
&\qquad+\frac{31}{3072}\alpha(-1)^4\Bigr)e^\alpha.
\end{align*}
Since $\phi_0=e^\alpha\in W$, we have $\phi_0(z)\phi_n(w)\sim 0$ for any $n\geq 0$.
However, $C$ is not a commutative vertex algebra since
$$
\phi_1(z)\phi_1(w)\sim-\frac{25}{32}\left(\frac{e^{2\alpha}(w)}{(z-w)^2}
+\frac{(\alpha(-1)e^{2\alpha})(w)}{z-w}\right).
$$

The following is the main result of  this Section.

\begin{theorem}\label{thm:genc}
The set $S$ strongly generates $C$. Moreover,
$S$ is a minimal set of generators in the sense that no proper subsets of $S$ generate $C$. 
\end{theorem}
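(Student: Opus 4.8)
The plan is to deduce both assertions from a single linear independence statement. Since every element of $S$ has charge $\alpha$ and charges add under the products $u(n)v$, any monomial $a_1(n_1)\cdots a_r(n_r)\bm 1$ with $a_i\in S$ has charge $r\alpha$; hence the charge-$\alpha$ component of the vertex subalgebra generated by $S$ is spanned by the vectors $\phi_k(n)\bm 1$ $(k\geq 0,\ n\in\ZZ)$, and as $\phi_k(n)\bm 1=0$ for $n\geq 0$ while $\phi_k(-n-1)\bm 1=\tfrac1{n!}\partial^{\,n}\phi_k$, it equals $\cspan\{\partial^{\,j}\phi_k : j,k\geq 0\}$; the same holds with $S\setminus\{\phi_N\}$ in place of $S$. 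Everything thus reduces to
\begin{equation*}
(\star)\qquad \text{the family } \{\partial^{\,j}\phi_k : j,k\geq 0\} \text{ is linearly independent in } C .
\end{equation*}
Granting $(\star)$, minimality is immediate, since then $\phi_N\notin\cspan\{\partial^{\,j}\phi_k : k\neq N\}$ for every $N$, so no proper subset of $S$ generates $C$. For strong generation, $\phi_k$ has conformal weight $2k+1$, so in weight $\Delta$ the family $(\star)$ consists of the vectors $\partial^{\,\Delta-1-2k}\phi_k$ with $0\leq k\leq\lfloor(\Delta-1)/2\rfloor$, that is $\lfloor(\Delta+1)/2\rfloor$ of them; extracting the $z^1$-coefficient of \eqref{eqn:fullchar} gives $\sum_\Delta\dim C^\alpha_\Delta\,q^\Delta=q/(q)_2$, whence $\dim C^\alpha_\Delta=\lfloor(\Delta+1)/2\rfloor$, and therefore by $(\star)$ the $\partial^{\,j}\phi_k$ form a basis of $C^\alpha$. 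In particular each generator $\Phi^\circ(n,m)$ $(m\geq n\geq 0)$ of $C$ furnished by \Cref{prop:basisnew}, having charge $\alpha$, is a linear combination of the strong monomials $\partial^{\,j}\phi_k=j!\,\phi_k(-j-1)\bm 1$; since the span of the strong monomials in $S$ is a vertex subalgebra of $C$ and contains a generating set of $C$, it equals $C$, i.e.\ $S$ strongly generates $C$.

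To prove $(\star)$ it suffices to fix $\Delta\geq 1$ and show that $\{\partial^{\,\Delta-1-2k}\phi_k : 0\leq k\leq\lfloor(\Delta-1)/2\rfloor\}$ is linearly independent in $C^\alpha_\Delta$. I would induct on $\Delta$, the case $\Delta=1$ being clear, using: (i) $\partial=L_{-1}$ is injective on $C^\alpha$ --- indeed for $v=p\,e^\alpha\in M(1,\alpha)$ one has $\partial v=\bigl(\partial p+\alpha(-1)p\bigr)e^\alpha$, whose degree in $\alpha(-1)$ exceeds that of $v$ by one, so $\partial v\neq 0$ when $v\neq 0$ (equivalently $\ker L_{-1}=\CC\bm 1$ in $V_{A_1}$); and (ii) the extremal vector $\phi_n=\Phi^\circ(n,n)$ is not a total derivative, $\phi_n\notin\partial(C^\alpha_{2n})$, for all $n\geq 1$. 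Given (i) and (ii) the step is routine: for $\Delta$ odd the weight-$(\Delta+1)$ family is $\partial$ applied to the weight-$\Delta$ family, a basis of $C^\alpha_\Delta$ by induction and the dimension count, hence independent by (i); for $\Delta$ even it is that set together with the single extra vector $\phi_{\Delta/2}$, and $\partial(C^\alpha_\Delta)$ then having codimension one in $C^\alpha_{\Delta+1}$, independence follows from (i) and (ii).

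For (ii) I would use the relation $[L_{-1},e^\varpi(k)]=-k\,e^\varpi(k-1)$ to compute, for integers $b>a\geq 0$,
\begin{equation*}
\partial\,\Phi^\circ(a,b)=\bigl(b+\tfrac32\bigr)\,\Phi^\circ(a,b+1)+(a+1)\,\Phi^\circ(a+1,b).
\end{equation*}
Applied to the basis $\{\Phi^\circ(a,2n-1-a):0\leq a\leq n-1\}$ of $C^\alpha_{2n}$, this exhibits $\partial\colon C^\alpha_{2n}\to C^\alpha_{2n+1}$ as a lower-bidiagonal matrix in the $\cC$-bases whose diagonal entries $2n+\tfrac12-a$ are all nonzero; deleting the row indexed by $\Phi^\circ(n,n)$ leaves an invertible triangular matrix, so the composite $C^\alpha_{2n}\xrightarrow{\ \partial\ }C^\alpha_{2n+1}\to C^\alpha_{2n+1}/\CC\phi_n$ is bijective. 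Consequently $\phi_n=\partial v$ would force $v$ to be sent to $0$ by this bijection, hence $v=0$ and $\phi_n=0$, which is absurd; this establishes (ii), hence $(\star)$, hence the theorem.

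The conceptual input is \Cref{prop:basisnew}, that the $\Phi^\circ(n,m)$ with $m\geq n$ already generate $C$, which I treat as given. Within the present argument the only real work is item (ii): tracking $\partial$ on the combinatorial basis $\cC$ and checking that no structure constant in the bidiagonal matrix vanishes --- and, should one prefer to expand $\partial\phi_n$ directly rather than argue by codimension, reordering the ``out-of-order'' products $e^\varpi(k_2)e^\varpi(k_1)\bm 1$ through the commutation relation \eqref{commutation} and the identities $e^\varpi(-\ell-\tfrac32)e^\varpi=S_\ell(\varpi)e^\alpha$. I expect this bookkeeping, together with the dimension matching against \eqref{eqn:fullchar}, to be the step requiring care, though not a deep obstacle.
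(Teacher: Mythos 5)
Your minimality argument is sound, and your route to the charge-$\alpha$ statement (injectivity of $\partial$ on $C^\alpha$, the dimension count $\dim C^\alpha_\Delta=\lfloor(\Delta+1)/2\rfloor$ extracted from \eqref{eqn:fullchar}, and the bidiagonal matrix of $\partial$ in the $\cC$-bases) is a legitimate variant of \Cref{prop:basisc1}, replacing the paper's $sl_2$ Verma-module decomposition by a character count while keeping the same matrix computation. The gap is in the deduction of \emph{strong} generation. You assert that ``the span of the strong monomials in $S$ is a vertex subalgebra of $C$.'' This is unjustified and false as a general principle: for a subset $T$ of a vertex algebra, the span of the monomials $a_1(n_1)\cdots a_r(n_r)\bm 1$ with $a_i\in T$, $n_i\le -1$ is a vertex subalgebra precisely when $T$ strongly generates the subalgebra it generates, so invoking it here is circular. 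A counterexample sits inside the ambient algebra of this paper: $T=\{e^{\alpha},e^{-\alpha}\}$ generates $V_{A_1}$, but every nontrivial strong monomial in $T$ of charge $0$ has at least two factors and hence conformal weight at least $2$, so $\alpha(-1)\bm 1\in V_{A_1}=\langle e^{\pm\alpha}\rangle$ does not lie in the span of strong monomials; were that span a subalgebra it would contain all of $V_{A_1}$. Consequently, from the two facts you actually use --- that the $\Phi^\circ(n,m)$ generate $C$, and that each $\Phi^\circ(n,m)$ lies in $\cspan\{\partial^{j}\phi_k\}$ --- you may conclude only that $S$ generates $C$, not that it does so strongly.

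The missing ingredient is the full strength of \Cref{prop:basisnew}, which you cite but then use only through its weaker ``in particular'' consequence: the basis \eqref{eqn:basisnew} of $C$ consists of iterated \emph{negative} modes $\Phi^\circ(n_{2i-1},n_{2i}+i-1)(-i)$ applied to $\bm 1$. Once each $\Phi^\circ(a,b)$ is rewritten as a combination of $\partial^{\ell}\phi_m$ (your item on $C^\alpha$), formula \eqref{derivnm} turns $(\partial^{\ell}\phi_m)(-i)$ into a nonzero multiple of $\phi_m(-i-\ell)$ with $-i-\ell\le -1$, so every basis vector of $C$ becomes a combination of monomials $\phi_{m_1}(-\ell_1-1)\cdots\phi_{m_r}(-\ell_r-1)\bm 1$. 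This is exactly how the paper concludes strong generation, and it is the step your write-up needs in place of the unsupported subalgebra claim; with it, the remainder of your argument goes through.
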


 Note that the second statement is stated for 
 generators but not for strong generators.
In particular, we have the following corollary.

\begin{corollary}\label{thm:notfin}
The vertex algebra $C$ is not finitely generated.
\end{corollary}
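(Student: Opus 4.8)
\emph{The plan} is to deduce \Cref{thm:notfin} purely formally from \Cref{thm:genc}, by exploiting the elementary principle that a vertex algebra admitting an \emph{infinite} minimal generating set cannot be finitely generated. Concretely, I would argue by contradiction: suppose $C$ is generated by a finite set $T=\{t_1,\dots,t_k\}\subset C$. Since, by \Cref{thm:genc}, the infinite set $S=\{\phi_n\mid n\geq0\}$ generates $C$, each $t_i$ is a \emph{finite} linear combination of monomials $a_1(m_1)\cdots a_r(m_r)\bm1$ with $a_j\in S$ and $m_j\in\ZZ$; hence each $t_i$ lies in the vertex subalgebra $\langle S_i\rangle$ generated by some \emph{finite} subset $S_i\subseteq S$. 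Putting $S':=\bigcup_{i=1}^{k}S_i$, a finite subset of $S$, we get $C=\langle T\rangle\subseteq\langle S'\rangle\subseteq C$, so $S'$ already generates $C$. As $S$ is infinite, $S'$ is a proper subset of $S$, contradicting the minimality assertion of \Cref{thm:genc}. Therefore $C$ is not finitely generated.

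I do not expect any real obstacle here: the entire content of the corollary is carried by the minimality half of \Cref{thm:genc}, and the only step needing (the most routine) justification is that ``$t_i\in\langle S_i\rangle$ for some finite $S_i$'', which is immediate because the linear combinations and monomials occurring in the definition of ``generates'' are finite, so only finitely many $\phi_n$ can appear in an expression for a given $t_i$.

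As an alternative I would note that one can also obtain \Cref{thm:notfin} in a self-contained way, bypassing the minimality part of \Cref{thm:genc} entirely, and I would include this as a remark. From the fermionic character formula \eqref{eqn:fullchar} one reads off that $\dim C^{\alpha}_{\Delta}$ is unbounded as $\Delta\to\infty$ (indeed $\dim C^{\alpha}_{\Delta}=\lfloor(\Delta-1)/2\rfloor+1$ for $\Delta\geq1$). On the other hand, if $C$ were generated by a finite set $T$, which we may take to consist of bi-homogeneous vectors lying in $\bigoplus_{\Delta\geq1}C_{\Delta}$, then, since charges inside $C$ are non-negative multiples of $\alpha$, are additive under the mode operations, and $C^{0}=\CC\bm1$, every monomial in the elements of $T$ that lands in the charge-$\alpha$ subspace must be of the form $t(m)\bm1$ with $t\in T$ of charge $\alpha$ and $m\in\ZZ$, hence a scalar multiple of $\partial^{\,j}t$ for some $j\geq0$. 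Because $\partial=L_{-1}$ raises conformal weight by $1$, this forces $\dim C^{\alpha}_{\Delta}\leq\#\{t\in T\mid t\text{ has charge }\alpha\}$ for all $\Delta$, a contradiction. In the paper I would present the short deduction from \Cref{thm:genc} as the proof of \Cref{thm:notfin} and relegate this dimension count to a remark for the reader who wants an argument independent of minimality.
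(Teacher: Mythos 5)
Your main argument coincides with the paper's own proof of \Cref{thm:notfin}: assume a finite generating set, observe that each generator lies in the subalgebra generated by finitely many $\phi_n$, and contradict the minimality half of \Cref{thm:genc}. Your alternative dimension-count remark (unboundedness of $\dim C^{\alpha}_{\Delta}$ versus the bound forced by finitely many charge-$\alpha$ generators) is also sound, but the core proof is essentially identical to the paper's.
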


\begin{proof}
Suppose on the contrary that $C$ is generated by a finite number of 
elements $v_1,\ldots,v_r$. Since we can express each $v_i$ with a finite number of elements $\phi_0,\ldots,\phi_{s_i}$ of $S$, by setting $s=\max\{s_1,\ldots,s_r\}$, we
have $V=\langle v_1,\ldots,v_r\rangle=\langle \phi_0,\ldots,\phi_s\rangle$, which contradicts to the minimality of $S$.
\end{proof}

The following propositions are the key propositions to prove \Cref{thm:genc}:

\begin{proposition}\label{prop:basisnew}
For any $r\geq 0$, the following vectors form a basis $\cC^{r\alpha}_{new}$ of $C^{r\alpha}$:
\begin{align}\label{eqn:basisnew}
&\Phi^\circ(n_{2r-1},n_{2r}+r-1)(-r)\Phi^\circ(n_{2r-3},n_{2r-2}+r)(-r+1)\cdots\\
&\quad\cdots \Phi^\circ(n_{2i-1},n_{2i}+i-1)(-i)\cdots
 \Phi^\circ(n_3,n_4+1)(-2)\Phi^\circ(n_1,n_2)(-1)\bm 1,\nonumber
\end{align}
with $n_{2r}\geq \cdots \geq n_1\geq 0$.
In particular, the collection $\cC_{new}:=\bigsqcup_{r=0}^\infty \cC_{new}^{r\alpha}$ is a basis of $C$.
 \end{proposition}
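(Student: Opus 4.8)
The plan is to compare graded dimensions with the character formula \eqref{eqn:fullchar} and then establish linear independence by a leading-term argument.

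First I would check that every vector in \eqref{eqn:basisnew} lies in $C^{r\alpha}$. For $b\ge a\ge 0$ the vector $\Phi^\circ(a,b)\in\cB^\circ$ has charge $\alpha$, so all of its modes preserve the subalgebra $W^\circ$ and shift charge by $\alpha$; applying $r$ such modes to $\bm 1$ yields an element of $W^\circ$ of charge $r\alpha\in A_1$, hence of $W^\circ\cap V_{A_1}=C$. (Here $\Phi^\circ(n_{2i-1},n_{2i}+i-1)$ is a genuine element of $\cB^\circ$ since $n_{2i}+i-1\ge n_{2i-1}\ge 0$ when $i\ge 1$.) Next, using $[H,v(n)]=(\Delta_v-n-1)v(n)$ with $\Delta_{\Phi^\circ(a,b)}=a+b+1$, the mode $\Phi^\circ(n_{2i-1},n_{2i}+i-1)(-i)$ raises conformal weight by $n_{2i-1}+n_{2i}+2i-1$, so the $(n_1,\ldots,n_{2r})$-vector has conformal weight $r^2+\sum_{k=1}^{2r}n_k$. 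Since $\cC^{r\alpha}_{new}$ is indexed by the very same set $\{n_{2r}\ge\cdots\ge n_1\ge 0\}$ as the known basis $\{\Phi^\circ(n_1,\ldots,n_{2r})\}$ of $C^{r\alpha}$, with matching conformal weights, the two families have the same (finite) dimension in every bidegree; by \eqref{eqn:fullchar} it therefore suffices to prove that the vectors in \eqref{eqn:basisnew} are linearly independent.

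For independence, I would expand each vector in \eqref{eqn:basisnew} in the basis $\cC^{r\alpha}$. Working from the innermost factor outward, one repeatedly applies the associativity formula \eqref{associativity}, together with \eqref{derivnm} and the product rules for $e^\varpi$ recorded in \Cref{subsecformula}, to rewrite $\Phi^\circ(n_{2i-1},n_{2i}+i-1)(-i)$, acting on a $\cB^\circ$-monomial of $W^\circ$, as a finite combination of vectors $e^\varpi(k_1)\cdots e^\varpi(k_{2i})\bm 1$, and then straightens these into $\cB^\circ$ by means of the commutation relation \eqref{commutation}. Fixing a total order on the tuples $(m_1,\ldots,m_{2r})$ compatible with straightening, an induction on $r$ (peeling off the outermost mode, so that the $r$-vector is $\Phi^\circ(n_{2r-1},n_{2r}+r-1)(-r)$ applied to the $(r-1)$-vector) should isolate a distinguished $\cB^\circ$-monomial $\Phi^\circ(m^\ast_1,\ldots,m^\ast_{2r})$ of maximal order occurring in this expansion, with a nonzero coefficient (a product of binomial coefficients from \eqref{derivnm} and \eqref{associativity}), in such a way that $(n_1,\ldots,n_{2r})\mapsto(m^\ast_1,\ldots,m^\ast_{2r})$ is injective on each bidegree — and hence, by the dimension count, a self-bijection of the index set. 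Distinct inputs then produce distinct leading terms, so \eqref{eqn:basisnew} is linearly independent, and with the dimension count it is a basis of $C^{r\alpha}$; taking the union over $r\ge 0$ gives a basis of $C$.

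I expect the last step to be the main obstacle: carrying out the straightening precisely enough to pin down the leading monomial, to verify that the induced map on tuples is injective (the source and target being finite sets of equal size, injectivity is the entire content), and to confirm that the leading coefficient never vanishes. This is the point at which the combinatorics of the GVA relations for $V_{A_1^\circ}$ must be confronted in earnest. An alternative that avoids the coefficient bookkeeping is to prove spanning directly, by downward induction on the same order: show that each $\Phi^\circ(m_1,\ldots,m_{2r})$ is, modulo strictly lower monomials, a nonzero multiple of a vector in \eqref{eqn:basisnew}; the dimension count then again forces the family to be a basis. Either route reduces to essentially the same analysis.
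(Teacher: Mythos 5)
Your overall strategy is the same as the paper's: order the known combinatorial basis $\cC^{r\alpha}$ lexicographically and argue that each vector of \eqref{eqn:basisnew} has a well-controlled leading term with nonzero coefficient. The preliminary bookkeeping (membership in $C^{r\alpha}$, conformal weight $r^2+\sum_k n_k$, matching index sets) is correct. But the decisive step is exactly the one you defer: you posit an unspecified leading $\cB^\circ$-monomial $\Phi^\circ(m_1^\ast,\ldots,m_{2r}^\ast)$, an unverified nonvanishing coefficient, and an injectivity statement for the index map that you yourself describe as ``the entire content'' and ``the main obstacle.'' As written, this is a plan rather than a proof: nothing in the proposal rules out cancellation of the would-be leading coefficient or a collision of leading terms for distinct tuples, and the fallback ``prove spanning by downward induction'' faces the identical unresolved combinatorics.

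The paper closes this gap with a short, concrete computation rather than an elaborate straightening analysis. \Cref{lem:borcx}, a direct application of the associativity formula \eqref{associativity} together with \eqref{derivnm}, expands $\bigl(e^\varpi(-n-3/2)e^\varpi(-\ell-1)\bm1\bigr)(k)d$ into just two explicit families of terms $e^\varpi(\ast)e^\varpi(\ast)d$. Applying this to the outermost mode acting on an inner monomial (as in \Cref{lem:rec2} for $r=2$, then iterated for general $r$), one reads off that the vector indexed by $(n_1,\ldots,n_{2r})$ equals $c\,\Phi^\circ(n_1,\ldots,n_{2r})$ plus strictly lower terms in the stated lexicographic order, with $c\neq 0$ coming from the $j=0$ binomial coefficients. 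In other words, the leading-term map on index tuples is the \emph{identity}, so the injectivity you worry about is automatic, the change-of-basis matrix is triangular with nonzero diagonal, and the character formula \eqref{eqn:fullchar} is not even needed. To repair your argument you would need to supply precisely this identification of the leading monomial and its coefficient; without it the proposal does not establish the proposition.
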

 
 The proposition in particular implies that the elements 
$\Phi^\circ (n,m)$ with $m\geq n\geq 0$ generate $C$.

\begin{proposition}\label{prop:basisc1}
The following set is a basis of the vector space $C^\alpha$:
$$
\langle S\rangle_\partial=
\{\partial^n\phi_m\,|\,n,m\geq0\}.
$$
\end{proposition}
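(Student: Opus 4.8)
The plan is to reduce to a linear‑independence statement inside each conformal‑weight space, pass to a two‑variable model, and then finish by induction on the weight.

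Identify $M(1,\alpha)$ with $\CC[\alpha(-1),\alpha(-2),\dots]\,e^\alpha$; by translation covariance $\partial$ is the derivation with $\partial e^\alpha=\alpha(-1)e^\alpha$ and $\partial\alpha(-i)=i\,\alpha(-i-1)$, i.e.\ $\partial\bigl(P\,e^\alpha\bigr)=\bigl(\alpha(-1)P+DP\bigr)e^\alpha$ with $D=\sum_{i\ge1}i\,\alpha(-i-1)\partial_{\alpha(-i)}$. Since $\phi_m$ has conformal weight $2m+1$ and $\partial=L_{-1}$ raises weight by $1$, the vector $\partial^n\phi_m$ lies in $C^\alpha$ (quasiconformality of $C$, and $\partial$ preserves charge) and has weight $n+2m+1$. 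By \eqref{eqn:fullchar}, $\dim C^\alpha_{1+N}$ equals the coefficient of $q^{1+N}$ in $q/(q)_2=\sum_{a,b\ge0}q^{1+a+2b}$, namely $\lfloor N/2\rfloor+1=\#\{(n,m)\in\Znn^2:n+2m=N\}$. Hence $\{\partial^n\phi_m:n+2m=N\}$ has exactly $\dim C^\alpha_{1+N}$ elements, so it suffices to prove that for each $N$ these are linearly independent; taking the union over $N$ then gives the asserted basis.

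Next I would pass to two variables. Let $\pi\colon M(1,\alpha)\twoheadrightarrow\CC[x,y]$ be the linear surjection sending every monomial involving some $\alpha(-i)$ with $i\ge3$ to $0$, with $\alpha(-1)\mapsto x$, $\alpha(-2)\mapsto y$, $e^\alpha\mapsto1$; grade $\CC[x,y]$ by $\deg x=1$, $\deg y=2$. A direct check gives $\pi\circ\partial=(x+y\,\partial_x)\circ\pi$: the only summand of $D$ not annihilated by $\pi$ is $\alpha(-2)\partial_{\alpha(-1)}$, which descends to $y\,\partial_x$. Thus $\pi(\partial^n\phi_m)=(x+y\,\partial_x)^n\psi_m$ with $\psi_m:=\pi(\phi_m)\in\CC[x,y]_{[2m]}$, and since $\dim\CC[x,y]_{[N]}=\lfloor N/2\rfloor+1$ again matches the count, it suffices to show that $\{(x+y\,\partial_x)^n\psi_m:n+2m=N\}$ is a basis of $\CC[x,y]_{[N]}$ for every $N$.

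The operator $x+y\,\partial_x$ is injective on $\CC[x,y]$ (compare top $x$-degrees), so for dimension reasons it restricts to an isomorphism $\CC[x,y]_{[2m]}\xrightarrow{\sim}\CC[x,y]_{[2m+1]}$ and to an embedding $\CC[x,y]_{[2m-1]}\hookrightarrow\CC[x,y]_{[2m]}$ of corank $1$. Induct on $N$. If $N=2m+1$, every vector $(x+y\,\partial_x)^n\psi_k$ with $n+2k=N$ has $n\ge1$ and is $(x+y\,\partial_x)$ applied to $(x+y\,\partial_x)^{n-1}\psi_k$, $(n-1)+2k=2m$; by the inductive hypothesis these form a basis of $\CC[x,y]_{[2m]}$, so their images form a basis of $\CC[x,y]_{[2m+1]}$. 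If $N=2m$, the same argument shows the vectors with $k<m$ (all with $n\ge2$) form a basis of the $m$-dimensional subspace $(x+y\,\partial_x)\CC[x,y]_{[2m-1]}$, while the remaining vector is $\psi_m$ itself; so the inductive step reduces to the single assertion $\psi_m\notin(x+y\,\partial_x)\CC[x,y]_{[2m-1]}$.

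Finally, $(x+y\,\partial_x)\CC[x,y]_{[2m-1]}$ is the kernel of the functional $\ell_m$ on $\CC[x,y]_{[2m]}$ normalized by $\ell_m(x^{2m-2j}y^j)=(-1)^j(2m-2j-1)!!/(2m-1)!!$ (forced by $\ell_m\bigl((x+y\,\partial_x)(x^{2m-1-2b}y^b)\bigr)=0$), so the point is to verify $\ell_m(\psi_m)\ne0$. I would compute this with the explicit Schur-polynomial expression for $\phi_m$ together with $\pi\bigl(S_a(\alpha/2)\bigr)=[z^a]\exp\!\bigl(\tfrac x2 z+\tfrac y4 z^2\bigr)$, which produces a generating function for $\psi_m(x,1)$; applying $\ell_m$ — which on $\CC[x,y]_{[2m]}$ is the Gaussian-moment functional $Q\mapsto\frac{(-1)^m}{(2m-1)!!}\int_{\RR}Q(is,1)\,\frac{e^{-s^2/2}}{\sqrt{2\pi}}\,ds$ — collapses the double sum to a single residue and yields the closed form $\ell_m(\psi_m)=\binom{2m+\frac12}{m}\big/\bigl(8^m(2m-1)!!\,m!\bigr)$, which is manifestly positive, hence nonzero. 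This completes the induction, and with it the proposition. I expect this last identity to be the only real work; everything preceding it is bookkeeping, and it can alternatively be extracted from a ``leading-length'' filtration argument directly inside $M(1,\alpha)$, which boils down to an essentially equivalent scalar.
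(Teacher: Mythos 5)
Your argument is correct, and it takes a genuinely different route from the paper. The paper exploits the quasiconformal $sl_2$-triple $(-L_1,-2L_0,L_{-1})$: it decomposes $C^\alpha$ into Verma modules $M(-2(2n+1))$ (using irreducibility and projectivity of Vermas with non-dominant highest weight), which gives that $\{\partial^n v_m\}$ is a basis for suitable highest weight vectors $v_m$, and then shows $\phi_n\equiv v_n$ modulo $\partial C^\alpha_{2n}$ by writing $\{\partial\Phi^\circ(i,2n-1-i)\}\cup\{\phi_n\}$ in the combinatorial basis $\{\Phi^\circ(i,2n-i)\}$ via the relation $\partial\Phi^\circ(n,m)=(m+\tfrac32)\Phi^\circ(n,m+1)+(n+1)\Phi^\circ(n+1,m)$, the resulting matrix being lower triangular with nonzero diagonal. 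You instead bypass the $sl_2$ structure entirely: you count dimensions from the character \eqref{eqn:fullchar}, project onto $\CC[x,y]$ where $\partial$ intertwines with $T=x+y\partial_x$, run the parity induction, and reduce everything to the single scalar nonvanishing $\ell_m(\psi_m)\neq 0$ per even weight. All of your bookkeeping steps check out (the intertwining $\pi\circ\partial=T\circ\pi$, injectivity of $T$, the corank computations, and the identification of $T\,\CC[x,y]_{[2m-1]}$ as $\ker\ell_m$ with the Gaussian-moment description of $\ell_m$), and your claimed closed form is correct: writing $\phi_m=[z^{m+1/2}w^m]\,(z-w)^{1/2}E^-(\varpi,z)E^-(\varpi,w)e^\alpha$, one gets $\psi_m=[z^{m+1/2}w^m]\,(z-w)^{1/2}\exp(\tfrac{x}{2}(z+w)+\tfrac{y}{4}(z^2+w^2))$, the Gaussian average replaces the exponential by $e^{(z-w)^2/8}$, and the coefficient extraction collapses to the single term $p=m$ of $\sum_p (z-w)^{2p+1/2}/(8^pp!)$, giving exactly $\ell_m(\psi_m)=\binom{2m+1/2}{m}\big/\bigl(8^m(2m-1)!!\,m!\bigr)>0$. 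One caution if you carry this out via the Schur expansion quoted in the paper: the displayed formula for $\phi_n$ there appears to be missing a factor $(-1)^k$ (compare it with the explicit $\phi_1,\phi_2$); taken verbatim it produces $(z+w)^{1/2}$ in place of $(z-w)^{1/2}$ and the clean collapse fails, so derive the generating function for $\psi_m$ directly from $\phi_m=e^\varpi(-m-3/2)e^\varpi(-m-1)\bm 1$ as above. Comparing the two proofs: the paper's is computation-free beyond one triangular determinant and needs no explicit expression for $\phi_n$, while yours avoids the representation theory of $sl_2$ altogether at the cost of one explicit (but short and pleasant) identity; you should also state the trivial base case $N=0$ of your induction.
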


 We postpone the proofs of these propositions to the next Section
 and first show \Cref{thm:genc} by using the propositions.

\begin{proof}[Proof of \Cref{thm:genc}]
Consider the basis element \eqref{eqn:basisnew} and
let $i=1,2,\ldots,r$ be an integer.
By \Cref{prop:basisc1}, the vector $\Phi^\circ(n_{2i-1},n_{2i}+i-1)$ is a linear sum of monomials of the form $\partial^\ell\phi_m$ with  $\ell,m\geq 0$.
Then the operator $\Phi^\circ(n_{2i-1},n_{2i}+i-1)(-i)$ is a linear sum of the operators of the form
$
\phi_m(-i-\ell)
$
with $\ell,m\geq 0$.
Thus, \eqref{eqn:basisnew} is a sum of monomials of the form $\phi_{m_1}(-\ell_1-1)\cdots \phi_{m_r}(-\ell_r-1)\bm1$ with $m_1\ldots,m_r,\ell_1,\ldots,\ell_r\geq 0$.
It implies that $C$ is strongly generated by $S$.

Let $S'$ be a proper subset of $S$ and $U=\langle S'\rangle_{VA}\subset C$.
Since the charges of elements of $S$ are all $\alpha$,
we see that $U^\alpha$ is spanned by $\langle S'\rangle_\partial$.
By \Cref{prop:basisc1}, we have $U^\alpha\subsetneq C^\alpha$.
Thus, $S'$ does not generate $C$, which is the minimality of $S$.
\end{proof}

Note that we clearly see by using  the combinatorial basis $\cB$ of $W$  that $W$ is strongly generated by $e^\alpha$.
The VOA $V_{A_1}$ is strongly generated by  $\{e^{\pm\alpha},\alpha(-1)\bm1\}$. 
They form a $sl_2$-triple with the 0-th mode as the Lie bracket.
Moreover, $V_{A_1}$ may be seen as the level one simple affine VOA
associated to this $sl_2$.

\section{Proofs of key propositions}\label{seckey}

In this Section, we show key propositions \Cref{prop:basisnew} and \Cref{prop:basisc1}.

We show some lemmas to prove \Cref{prop:basisnew}.
The following lemma follows immediately from associativity \eqref{associativity}.

\begin{lemma}\label{lem:borcx}
For any $n,\ell,k\in\ZZ$ and $d\in V_{A_1}$, we have
\begin{align*}
&(e^\varpi(-n-3/2)e^\varpi(-\ell-1)\bm1)(k)d\\
&\quad =\sum_{j\in\ZZ_{\geq0}} (-1)^j\binom{-n-3/2}j \binom{\ell-k-j-1}\ell
e^\varpi(-n-j-3/2)e^\varpi(-\ell+k+j)d\\
&\quad +\sum_{j\in\ZZ_{\geq0}} (-1)^{n+j}\binom{-n-3/2}j \binom{n+\ell-k+j+1/2}\ell
e^\varpi(-n-\ell+k-j-3/2)e^\varpi(j)d
\end{align*}
\end{lemma}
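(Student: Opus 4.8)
The plan is to obtain the claimed identity directly from the associativity formula \eqref{associativity}. By linearity in $d$ it suffices to treat $d\in M(1,r\alpha)$ for a fixed $r\in\ZZ$; then $\Delta(\varpi,r\alpha)=\ZZ$ (one of the charges lies in $A_1$), so \eqref{associativity} applies to the triple $b=e^\varpi\in V^\varpi$, $c=e^\varpi(-\ell-1)\bm1\in V^\varpi$, $d\in V^{r\alpha}$: the $b$-mode index $-n-3/2$ lies in $-\tfrac12+\ZZ=\Delta(\varpi,\varpi)$ and the $c$-mode index $k$ lies in $\ZZ=\Delta(\varpi,r\alpha)$. Taking the role of ``$n$'' in \eqref{associativity} to be $-n-3/2$ gives
\begin{align*}
&(e^\varpi(-n-3/2)\,c)(k)d=\sum_{j\geq0}(-1)^j\binom{-n-3/2}{j}\Bigl(e^\varpi(-n-3/2-j)\,c(k+j)\,d\\
&\qquad\qquad{}-\eta(\varpi,\varpi)\,e^{\pi\sqrt{-1}(-n-3/2)}\,c(k-n-3/2-j)\,e^\varpi(j)\,d\Bigr).
\end{align*}

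The first simplification is the scalar in front of the second family of terms. Since $(\varpi,\varpi)=1/2$ we have $\eta(\varpi,\varpi)=\sqrt{-1}$ and $e^{\pi\sqrt{-1}(-n-3/2)}=(-1)^n e^{-3\pi\sqrt{-1}/2}=(-1)^n\sqrt{-1}$, so their product is $(-1)^{n+1}$ and hence $-\eta(\varpi,\varpi)\,e^{\pi\sqrt{-1}(-n-3/2)}=(-1)^n$.

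The remaining step is to eliminate $c=e^\varpi(-\ell-1)\bm1$ in favour of modes of $e^\varpi$. For $\ell\geq0$ one has $c=\tfrac1{\ell!}\partial^\ell e^\varpi$, so \eqref{derivnm} yields $c(m)v=\binom{-m+\ell-1}{\ell}\,e^\varpi(m-\ell)\,v$ for every $m\in\tfrac12\ZZ$; substituting $m=k+j$ into the first family and $m=k-n-3/2-j$ into the second and rewriting the binomials produces exactly the two stated sums, with coefficients $\binom{\ell-k-j-1}{\ell}$ and $\binom{n+\ell-k+j+1/2}{\ell}$ respectively. For $\ell<0$ both sides vanish (the left because $e^\varpi(-\ell-1)\bm1=0$, the right because every binomial coefficient occurring has negative lower index $\ell$), so there is nothing to prove. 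I do not expect any genuine obstacle here; the only points requiring care are the bookkeeping of the half-integral mode indices and the evaluation of the phase $\eta(\varpi,\varpi)e^{\pi\sqrt{-1}(-n-3/2)}$.
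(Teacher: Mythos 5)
Your proposal is correct and follows essentially the same route as the paper: substitute $b=e^\varpi$, $c=e^\varpi(-\ell-1)\bm1$ into the associativity formula \eqref{associativity} with the mode index $-n-3/2$, evaluate the phase $-\eta(\varpi,\varpi)e^{\pi\sqrt{-1}(-n-3/2)}=(-1)^n$, and then rewrite the modes of $c$ via \eqref{derivnm}. Your extra checks (that $\Delta(\varpi,r\alpha)=\ZZ$ so \eqref{associativity} applies, and the trivial case $\ell<0$) are consistent with, and slightly more explicit than, the paper's argument.
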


\begin{proof}
Substitute $b=e^\varpi$, $c=e^\varpi(-\ell-1)\bm1$ with $\beta=\gamma=\varpi$ and $\gamma\in L$ to  \eqref{associativity}
and replace $n$ with $-n-3/2$.
Then we have 
\begin{align*}
&(e^\varpi(-n-3/2)e^\varpi(-\ell-1)\bm1)(k)d\\
&\quad =\sum_{j\in\ZZ_{\geq0}} (-1)^j\binom{-n-3/2}j \{
e^\varpi(-n-j-3/2)(e^\varpi(-\ell-1)\bm1)(k+j)d\\
&\qquad -(-1)^{n+1}
(e^\varpi(-\ell-1)\bm1)(-n+k-j-3/2)e^\varpi(j)d\}.
\end{align*}
We then apply formula \eqref{derivnm}.
\end{proof}

We make use of  the following lexicographic order on the basis vectors $\Phi^\circ(n_1,\ldots,n_{2r})$ of $C^{r\alpha}$.
More precisely, we say $\Phi^\circ(n_1,\ldots,n_{2r})>\Phi^\circ(m_1,\ldots,m_{2r})$ if 
 $n_1>m_1$ or if there is $i\in\{1,\ldots,2r-1\}$ such that $n_j=m_j$ for all $1\leq j\leq i$ and $n_{j+1}>m_{j+1}$.

To see the ideas of the proof, we first show the vectors in \eqref{eqn:basisnew} with $r=2$ form a basis of $C^{2\alpha}$.

\begin{lemma}\label{lem:rec2}
The following vectors form a basis of $C^{2\alpha}$:
\begin{equation}\label{eqn:basisrec}
\Phi^\circ(n_3,n_4+1)(-2)\Phi^\circ(n_1,n_2),
\end{equation}
with $n_4\geq n_3\geq n_2\geq n_1\geq 0$.
\end{lemma}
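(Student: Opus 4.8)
The plan is to prove the statement by a triangular change of basis against the combinatorial basis of $C^{2\alpha}$ from \eqref{basisc}, combined with a dimension count. First, the proposed vector $\Phi^\circ(n_3,n_4+1)(-2)\Phi^\circ(n_1,n_2)$ has charge $2\alpha$ and conformal weight $n_1+n_2+n_3+n_4+4$, which is exactly the conformal weight of the basis vector $\Phi^\circ(n_1,n_2,n_3,n_4)$. Since both families are parametrised by the same poset $n_4\geq n_3\geq n_2\geq n_1\geq 0$, the fermionic character \eqref{eqn:fullchar} shows that in each fixed conformal weight the number of proposed vectors equals $\dim C^{2\alpha}_\Delta$. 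Hence it suffices to prove that the proposed vectors are linearly independent (equivalently, that they span).

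The mechanism is the expansion of \Cref{lem:borcx}. Applying it with $n\mapsto n_4+1$, $\ell\mapsto n_3$, $k\mapsto -2$ and $d=\Phi^\circ(n_1,n_2)$ writes $\Phi^\circ(n_3,n_4+1)(-2)\Phi^\circ(n_1,n_2)$ as a finite sum of vectors $e^\varpi(a)e^\varpi(b)\Phi^\circ(n_1,n_2)$ (from the first sum) and $e^\varpi(a')e^\varpi(j)\Phi^\circ(n_1,n_2)$ with $j\geq 0$ (from the second sum). Because $W^\circ$ is closed under the modes $e^\varpi(m)$, each of these vectors lies in $(W^\circ)^{2\alpha}=C^{2\alpha}$, so it can be rewritten in the charge-$2\alpha$ part of $\cB^\circ$, that is, in the vectors $\Phi^\circ(m_1,m_2,m_3,m_4)$ with $m_4\geq m_3\geq m_2\geq m_1\geq 0$, using the associativity and commutation relations \eqref{associativity}, \eqref{commutation} that underlie $\cB^\circ$. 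The key claim is that, with respect to the lexicographic order fixed just before the lemma, the $j=0$ summand of the first sum equals $(n_3+1)\Phi^\circ(n_1,n_2,n_3,n_4)$, while every remaining summand, once straightened into $\cB^\circ$, is a linear combination of vectors $\Phi^\circ(m_1,m_2,m_3,m_4)$ that are strictly smaller than $\Phi^\circ(n_1,n_2,n_3,n_4)$ — typically because conservation of conformal weight together with the decreasing condition $m_1\leq m_2\leq m_3\leq m_4$ forces one of the leading coordinates to drop; for the $j\geq 1$ summands of the first sum this is already visible before straightening, since they read $\Phi^\circ(n_1,n_2,n_3-j,n_4+j)$ up to reordering.

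Granting the claim, the transition matrix from $\{\Phi^\circ(n_1,n_2,n_3,n_4)\}$ to the proposed family is, in every weight space, triangular with nonzero diagonal entries $n_3+1$, hence invertible; equivalently, a downward induction on the lexicographic order expresses each $\Phi^\circ(n_1,n_2,n_3,n_4)$ as a combination of proposed vectors. Together with the dimension count this shows the proposed vectors form a basis of $C^{2\alpha}$. I expect the straightening-and-ordering step to be the main obstacle: one must handle the second-sum terms $e^\varpi(a')e^\varpi(j)\Phi^\circ(n_1,n_2)$, which are genuinely nonzero in general (for instance $e^\varpi(0)\phi_1\neq 0$), as well as the first-sum terms that require reordering, and verify in all cases that reduction to $\cB^\circ$ produces no basis vector exceeding $\Phi^\circ(n_1,n_2,n_3,n_4)$. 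This bookkeeping is routine but somewhat lengthy, and the same argument, carried out recursively in $r$, is what will establish \Cref{prop:basisnew}.
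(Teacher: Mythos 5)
Your proposal is correct and follows essentially the same route as the paper: apply \Cref{lem:borcx} with the same substitution, identify the leading term $c\,\Phi^\circ(n_1,n_2,n_3,n_4)$ (your explicit $c=n_3+1$ is consistent with the paper's unspecified $c\in\CC^\times$), and conclude by triangularity with respect to the lexicographic order, the remaining straightening of non-standard monomials being left as routine bookkeeping in both arguments. The extra character/dimension count you add is harmless but not needed once triangularity against the basis $\cC$ indexed by the same set is established.
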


\begin{proof}
Consider the vector in \eqref{eqn:basisrec} and write it by $\Psi(n_1,\ldots,n_4)$.
By applying \Cref{lem:borcx} to the operator
$$
\Phi^\circ(n_3,n_4+1)(-2)=(e^\varpi(-n_4-5/2)e^\varpi(-n_3-1)\bm1)(-2),
$$
we see that $\Phi^\circ(n_1,\ldots,n_4)$ is a sum of monomials of the form
\begin{align*}
&{\rm(i)}\ e^\varpi(-n_4-j-5/2)e^\varpi(-n_3-2+j)e^\varpi(-n_2-3/2)e^\varpi(-n_1-1)\bm1 \quad (j\geq0),\\
&{\rm(ii)}\ e^\varpi(-n_4-n_3-j-9/2)e^\varpi(j)e^\varpi(-n_2-3/2)e^\varpi(-n_1-1)\bm1 \quad (j\geq 0),
\end{align*}
with non-zero coefficients (but the vectors themselves can be zero if $j$ is large enough).
By writing it as the linear combination of the basis vectors of $C$, we see that
$$
\Psi(n_1,\ldots,n_4)=c\cdot \Phi^\circ(n_1,n_2,n_3,n_4)+(\mbox{lower terms})\quad (c\in\CC^\times).
$$
Thus, $\Psi(n_1,\ldots,n_4)$ with $n_4\geq n_3\geq n_2\geq n_1\geq 0$ form a basis of $C^{2\alpha}$.
\end{proof}

It is now easy to show \Cref{prop:basisnew}.

\begin{proof}[Proof of \Cref{prop:basisnew}]
As in the proof of \Cref{lem:rec2}, the element \eqref{eqn:basisnew}
has the form
$$
c\cdot \Phi^\circ(n_1,\ldots,n_{2r})+(\mbox{lower terms})\quad (c\in\CC^\times).
$$
Thus, we have proved the proposition.
\end{proof}

We next show a lemma to prove \Cref{prop:basisc1}. 

Recall the Virasoro modes $L_n$ from \Cref{sec:quasiconformality} and 
consider the $sl_2$-triple 
$$
e=-L_1, \quad h=-2L_0, \quad f=L_{-1},
$$
which satisfy $[e,f]=h, [h,e]=2e, [h,f]=-2f$.
Recall that $L_{-1}=\partial$ and $L_0=H$ on $V_{A_1^\circ}$.
The Lie algebra $sl_2=\langle e,h,f\rangle$ acts on the space $C^\alpha$, which enables us to apply the representation theory of $sl_2$ to study $C^\alpha$.
It turns out that $C^\alpha$ is an infinite direct sum of Verma modules.
Let $M(\lam)$ denote the Verma $sl_2$-module of highest weight $\lam\in\CC$.
Note that $C^\alpha_n$ is the $sl_2$-weight space of $C^\alpha$ of weight $-2n$.

\begin{lemma}\label{lemsl2}
As a $sl_2$-module, $C^\alpha$ decomposes into the sum of the form
$$
C^\alpha\cong \bigoplus_{n=0}^\infty M(-2(2n+1)).
$$
The highest weight vector $v_n$ of $M(-2(2n+1))$ has
conformal weight $2n+1$.
Moreover, for each $n\geq 0$, we have bases
$
\{\partial^{2n}v_0,\partial^{2n-2}v_1,\ldots,\partial^2v_{n-1},v_n\}
$
of $C^\alpha_{2n+1}$ and
$
\{\partial^{2n-1}v_0,\partial^{2n-3}v_1,\ldots,\partial v_{n-1}\}
$
of $C^\alpha_{2n}$.
\end{lemma}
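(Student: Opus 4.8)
The plan is to exploit the $sl_2$-action on $C^\alpha$ coming from the Virasoro modes $L_{-1},L_0,L_1$, together with the character formula \eqref{eqn:fullchar}. First I would record the weights: since $L_0=H$ acts on $C^\alpha_n$ by $n$ and $h=-2L_0$, the weight of $C^\alpha_n$ under $h$ is $-2n$; all weights of $C^\alpha$ are therefore non-positive and lie in $-2\ZZ_{\ge 0}$. The lowest conformal weight in $C^\alpha$ is $1$ (attained by $\phi_0 = e^\alpha$), so $C^\alpha$ contains no $sl_2$-weight $0$ and every nonzero vector of weight $-2$ is a highest-weight vector (it is annihilated by $e=-L_1$ for weight reasons, since there is no weight $0$). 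More generally, the highest-weight vectors are exactly $\ker(L_1)\cap C^\alpha$. I would argue that $C^\alpha$, as an $sl_2$-module, is a direct sum of lowest-weight Verma-type modules in the following sense: it is generated from $\ker L_1$ by the locally nilpotent-on-the-other-side operator $f=\partial=L_{-1}$ (here $e=-L_1$ raises weight and is locally nilpotent since weights are bounded above by $-2$), and $\partial$ acts \emph{injectively} on $C^\alpha$. Injectivity of $\partial$ on $C^\alpha$ is the crux: since $e^\alpha$ is central in $W^\circ$ and hence in $C$, one checks $\partial v = 0$ forces $v$ to be a multiple of $\bm1$ by translation covariance and the grading, but $\bm 1\notin C^\alpha$, so $\ker\partial\cap C^\alpha=0$. (Alternatively, $\partial$ on the charge-$\alpha$ space of $V_{A_1}$, i.e.\ on $M(1,\alpha)$, is the Heisenberg field mode action, manifestly injective on positive conformal weights.)

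Given that $\partial$ is injective, each highest-weight vector $v$ of weight $-2(2n+1)$ generates a free $\CC[\partial]$-module $\CC[\partial]v$ inside $C^\alpha$, which is precisely the Verma module $M(-2(2n+1))$ for the $sl_2$ with lowest-weight conventions as set up (note $[e,f]=h$, $[h,f]=-2f$, so $f$ lowers $h$-weight and generates the module). Thus $C^\alpha = \bigoplus_{v} \CC[\partial]v$ where $v$ ranges over a basis of $\ker(L_1)\cap C^\alpha$, and it remains to count, in each conformal weight, the dimension of the highest-weight space. Let $p_n=\dim C^\alpha_n$; from \eqref{eqn:fullchar} the coefficient of $z^1$ in $\ch(C)$ gives $\sum_n p_n q^n = q/((1-q)(1-q^2))$, the generating function for partitions into parts $1$ and $2$. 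The dimension of the new highest-weight vectors in weight $n$ is $p_n - p_{n-1}$ (the image of $\partial$ from weight $n-1$ is $p_{n-1}$-dimensional by injectivity, and the highest-weight space is the complement). Computing $\sum_n (p_n - p_{n-1})q^n = (1-q)\cdot q/((1-q)(1-q^2)) = q/(1-q^2) = \sum_{n\ge 0} q^{2n+1}$, so there is exactly one new highest-weight vector in each odd conformal weight $2n+1$ and none in even weights. Call it $v_n$; this yields the claimed decomposition $C^\alpha\cong\bigoplus_{n\ge 0}M(-2(2n+1))$.

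The bases of $C^\alpha_{2n+1}$ and $C^\alpha_{2n}$ then follow by reading off, in each conformal weight, the contributions of the summands: $M(-2(2m+1))$ contributes to conformal weight $k$ exactly the vector $\partial^{\,k-(2m+1)}v_m$ when $k\ge 2m+1$ and nothing otherwise. In weight $2n+1$ the contributing summands are $m=0,1,\dots,n$, giving $\{\partial^{2n}v_0,\partial^{2n-2}v_1,\dots,\partial^2 v_{n-1},v_n\}$; in weight $2n$ they are $m=0,1,\dots,n-1$, giving $\{\partial^{2n-1}v_0,\partial^{2n-3}v_1,\dots,\partial v_{n-1}\}$. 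These are bases because the sum is direct and $\partial$ is injective. The main obstacle I anticipate is pinning down the injectivity of $\partial$ on $C^\alpha$ cleanly and verifying that $\ker L_1\cap C^\alpha$ really is spanned by genuine highest-weight vectors (i.e.\ that the module is a direct sum of Vermas and not something with a more complicated socle) — but this is forced by the weight bound together with injectivity of $\partial$, since a lowest-weight $\CC[\partial]$-free module with $e$ acting locally nilpotently and $\ker e$ one-step is automatically a Verma module. Everything else is bookkeeping with the character.
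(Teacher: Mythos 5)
Your architecture is essentially the paper's (the same $sl_2$-triple $e=-L_1$, $h=-2L_0$, $f=\partial$, the same observation that all $h$-weights of $C^\alpha$ are $\le -2$, and the same dimension data $\dim C^\alpha_n$ read off the combinatorial basis), but there is a genuine gap at the decisive step. You assert that in each conformal weight $C^\alpha_n=\partial C^\alpha_{n-1}\oplus\bigl(\ker L_1\cap C^\alpha_n\bigr)$, so that the number of new singular vectors is $p_n-p_{n-1}$ and $C^\alpha$ is the direct sum of the Verma modules generated by $\ker L_1$. Neither half of this claim --- the directness $\ker L_1\cap \mathrm{im}\,\partial=0$ nor the spanning $\partial C^\alpha_{n-1}+\ker L_1\supseteq C^\alpha_n$ --- follows formally from ``injectivity of $\partial$ plus the weight bound'' as written: injectivity of $\partial$ makes $C^\alpha$ a free graded $\CC[\partial]$-module, but it does not by itself let you choose the $\CC[\partial]$-basis inside $\ker L_1$; that is exactly the semisimplicity statement the lemma is asserting, and your closing sentence (``a lowest-weight $\CC[\partial]$-free module \dots is automatically a Verma module'') presupposes it. The paper closes precisely this point by using that the Verma modules occurring here have antidominant highest weights $\lambda\le -2$, hence are irreducible and split off as direct summands, and then runs an induction against the dimensions $1,1,2,2,3,3,\dots$. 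If you want to keep your route, you must supply the two missing arguments, e.g.\ a computation with $[L_1,L_{-1}^k]$ (using that conformal weights are $\ge 1$ and $\partial$ is injective) to exclude singular vectors from $\mathrm{im}\,\partial$, and a quotient-correction argument (local nilpotency of $L_1$ plus surjectivity of $L_1$ along each antidominant Verma string) to show the Vermas generated by $\ker L_1$ exhaust $C^\alpha$. With that supplied, your generating-function count $q/(1-q^2)$ and the final read-off of the bases are correct and agree with the paper.

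A secondary issue: your primary justification of the injectivity of $\partial$ on $C^\alpha$ (``since $e^\alpha$ is central \dots $\partial v=0$ forces $v\in\CC\bm 1$'') does not work as stated --- centrality of $e^\alpha$ is irrelevant; what $\partial v=0$ gives is that $v(n)=0$ for $n\ge 0$ and $v(-1)$ is central, after which one still needs simplicity of $V_{A_1}$ or a direct computation. Your parenthetical alternative is the right one: on $M(1,\alpha)$ one has $L_{-1}(Pe^\alpha)=\bigl(\alpha(-1)P+\partial P\bigr)e^\alpha$, and comparing degrees in $\alpha(-1)$ gives injectivity. Note also that in the paper's argument injectivity of $\partial$ on $C^\alpha$ is a consequence of the Verma decomposition rather than an input to it.
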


\begin{proof}
We inductively prove the lemma.
First, the vector $v_0=\phi_0=e^\alpha\in C^\alpha_1$ is a highest weight vector 
of weight $-2$.
Since $C^\alpha_1$ is one-dimensional, the set $\{v_0\}$ is a basis of $C^\alpha_1$.
Note that the Verma $sl_2$-module $M(\lam)$ with a non-dominant 
weight $\lam$ is irreducible as well as projective.
Therefore, the submodule generated by $v_0$ is isomorphic to 
$M(-2)$ since the weight of $v_0$ is $-2$ and it is not dominant.
It then follows that the vectors $v_0,\partial v_0,\partial^2 v_0,\ldots$ are linearly independent.
Since $\dim C^\alpha_2=1$,
 the set $\{\partial v_0\}$ is a basis  $C^\alpha_2$.
Moreover, by the projectivity, $M(-2)=\langle v_0\rangle_{sl_2}$ is a direct summand of $C^\alpha$.
Now, since $\dim C^\alpha_3=2$, we have a highest weight vector
$v_1\in C^\alpha_3$ and $\{\partial^2v_0,v_1\}$ is a basis of $C^\alpha_3$.
Since the weight of $v_1$ is $-6$, which is not dominant,
we see that the submodule generated by $v_1$ is isomorphic to $M(-6)$ and it is a direct summand of $C^\alpha$.
Therefore, we have linearly independent set $\{v_0,\partial v_0,\ldots, v_1,\partial v_1,\ldots\}$.
Since $\dim C^\alpha_4=2$, the set $\{\partial^3v_0,\partial v_1\}$
is a basis of $C^\alpha_4$.
Then since $\dim C^\alpha_5=3$, we have a highest weight vector $v_2\in C^\alpha_5$ generating $M(-10)$.
By continuing the procedure, we have the lemma.
\end{proof}

\begin{proof}[Proof of \Cref{prop:basisc1}]
Let $n$ be a positive integer.
By \Cref{lemsl2}, the set $\{\partial^nv_m\,|\,n,m\geq 0\}$ is 
a basis of $C^\alpha$.
Therefore, it suffices to show that $v_n$ and $\phi_n$ are
congruent modulo $\partial (C^\alpha_{2n})$.
By \Cref{lemsl2}, we have $C^\alpha_{2n+1}=\partial (C^\alpha_{2n})\oplus \CC v_n$.
Since  $\partial:C^\alpha_{2n}\ra C^\alpha_{2n+1}$ is injective
by \Cref{lemsl2} and the set $\{\Phi^\circ(i,2n-1-i)\,|\,0\leq i\leq n-1\}$ is a basis of $C^\alpha_{2n}$ by \eqref{basisc}, 
the set $\{\partial\Phi^\circ(i,2n-1-i)\,|\,0\leq i\leq n-1\}$
is a basis of $\partial (C^\alpha_{2n})$.
Hence, it suffices to show that the system of vectors
$$
X=\{\partial\Phi^\circ(0,2n-1),\partial \Phi^\circ(1,2n-2),\ldots,
\partial\Phi^\circ(n-1,n),\phi_n\}
$$
is a basis of $C^\alpha_{2n+1}$.
We show this by expressing $X$ 
with respect to the basis 
$Y=\{\Phi^\circ(i,2n-i)\,|\,0\leq i\leq n\}$ of $C^\alpha_{2n+1}$.
Since 
\begin{equation*}
\partial \Phi^\circ(n,m)=\left(m+\frac32\right)\Phi^\circ(n,m+1)+(n+1)\Phi^\circ(n+1,m)\quad (n,m\geq0),
\end{equation*}
we have the following relation between $X$ and the basis $Y$:
\begin{align*}
&(\partial\Phi^\circ(0,2n-1),\partial \Phi^\circ(1,2n-2),\ldots,
\partial\Phi^\circ(n-1,n),\phi_n)\\
&\quad=
(\Phi^\circ(0,2n), \Phi^\circ(1,2n-1), 
\ldots, \Phi^\circ(n-1,n+1), \Phi^\circ(n,n))\cdot A
\end{align*}
with the lower triangular matrix
$$
A=
\begin{pmatrix}
2n+1/2& 0 &\cdots& \cdots &&&0\\
1&  2n-1/2&0&\cdots&&&0\\
0&  2&2n-3/2&\cdots&&&0\\
&&\cdots&&&\\
0&\cdots&&&&n+3/2&0\\
0&\cdots&&&&n&1\\
\end{pmatrix}.
$$
Since $\det A=(2n+1/2)(2n-1/2)\cdots(n+3/2)\cdot1\neq 0$,
the matrix $A$ is invertible.
Thus, $X$ is a basis of $C^\alpha_{2n+1}$, which completes the proof. 
\end{proof}

\section{The duality of $W$ and $C$}\label{secdual}

In this Section, we verify the following duality theorem.

\begin{theorem}\label{mainthm2}
The pair $(W,C)$ is a dual pair in $V_{A_1}$.
That is, $\com C {V_{A_1}}=W$ as well as $\com W{V_{A_1}}=C$.
\end{theorem}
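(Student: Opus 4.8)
The identity $\com W{V_{A_1}}=C$ is the definition of $C$, so the content of the theorem is the identity $\com C{V_{A_1}}=W$. Since $W$ is a commutative vertex algebra and $C=\com W{V_{A_1}}$ commutes with $W$ by construction, we get $W\subseteq\com C{V_{A_1}}$ at once. For the reverse inclusion I would use the standard fact that, in a vertex algebra, a vector commutes with a vertex subalgebra if and only if it commutes with a generating set; combined with \Cref{thm:genc} this gives
\[
\com C{V_{A_1}}=\bigcap_{n\ge0}\com{\{\phi_n\}}{V_{A_1}}\subseteq\com{\{\phi_0\}}{V_{A_1}}=\com{\langle e^\alpha\rangle}{V_{A_1}}=C ,
\]
so $\com C{V_{A_1}}\subseteq C$. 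Reformulating commutation as the vanishing of nonnegative modes, $\com C{V_{A_1}}=\{v\in C\st c(j)v=0\ \text{for all homogeneous}\ c\in C,\ j\ge0\}$; as $C$ is charge graded these conditions separate by total charge, so $\com C{V_{A_1}}$ is charge graded, and it is conformal-weight graded since $[H,c(j)]$ is a scalar multiple of $c(j)$.

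Next I would show $\com C{V_{A_1}}$ is stable under the $sl_2$-triple $e=-L_1$, $h=-2L_0$, $f=L_{-1}=\partial$ of \Cref{lemsl2}. Stability under $f$ and $h$ is clear. For $e=-L_1$, quasiconformality of $C$ gives $L_1c\in C$ for all $c\in C$, and the quasiconformal commutator $[L_1,c(j)]=(2\Delta_c-j-2)\,c(j+1)+(L_1c)(j)$ then yields, for $v\in\com C{V_{A_1}}$ and $j\ge0$, that $c(j)(L_1v)=-(2\Delta_c-j-2)c(j+1)v-(L_1c)(j)v=0$ because $c(j+1)v=0$ and $L_1c\in C$. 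Hence for each $r$ we obtain $sl_2$-submodules $W^{r\alpha}\subseteq(\com C{V_{A_1}})^{r\alpha}\subseteq C^{r\alpha}$. By the argument of \Cref{lemsl2}, for $r\ge1$ the module $C^{r\alpha}$ is a direct sum of irreducible (projective) Verma modules, all of highest weight $\le-2r^2<0$; all three modules above are therefore completely reducible, and it suffices to prove that every $sl_2$-primary vector (annihilated by $L_1$) lying in $\com C{V_{A_1}}$ already lies in $W$, for then $W^{r\alpha}$ and $(\com C{V_{A_1}})^{r\alpha}$ have the same space of primary vectors and hence coincide (the case $r=0$ being trivial, since $C^0=\CC\bm1=W^0$).

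It remains to take a primary $v\in C^{r\alpha}$ with $c(j)v=0$ for all $c\in C$, $j\ge0$, and show $v\in W=\langle\phi_0\rangle$. Using $C=W^\circ\cap V_{A_1}$ and the basis of normally ordered $\phi$-monomials $\phi_{m_1}(-k_1-1)\cdots\phi_{m_s}(-k_s-1)\bm1$ afforded by \Cref{prop:basisnew} together with \Cref{prop:basisc1}, I would note that $W$ is spanned by the monomials in $\phi_0=e^\alpha$ alone, so it is enough to rule out any factor $\phi_M$ with $M\ge1$ from the expansion of $v$. Fix a monomial order favoring large $\phi$-indices, and suppose the leading monomial of $v$ has a factor $\phi_M$, $M\ge1$. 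Applying a suitable nonnegative mode $\phi_M(j)$ and commuting it to the right, every factor $\phi_0(-k-1)$ is passed without cost (since $\phi_0\in W$ commutes with $\phi_M\in C$), while passing the $\phi_M$-factor produces, via the commutation relation \eqref{commutation}, a contribution built from the singular part of the self-operator product $\phi_M(z)\phi_M(w)$; by \Cref{lem:borcx} and the Schur-polynomial formula for $\phi_M$ one checks that the top singular coefficient $\phi_M(4M-3)\phi_M$ is a nonzero multiple of $e^{2\alpha}$, so this contribution is nonzero, and with the order chosen so that it cannot be cancelled by the other factors or by lower monomials, this contradicts $\phi_M(j)v=0$.

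I expect the whole difficulty of the theorem to be concentrated in this last step: choosing the monomial order and the mode $\phi_M(j)$ so that the $e^{2\alpha}$-contribution survives uncancelled, and verifying that $\phi_M(4M-3)\phi_M\ne0$ for every extremal generator $\phi_M$. Both amount to finite computations with \Cref{lem:borcx} and the explicit form of $\phi_M$, but the combinatorial bookkeeping — which subsumes, in charge one, the statement that no $\phi_M$ with $M\ge1$ has vanishing self-operator product — is where the real content lies.
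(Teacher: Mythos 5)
Your reductions at the start are sound: $W\subseteq\com C{V_{A_1}}\subseteq\com W{V_{A_1}}=C$, the reformulation via nonnegative modes, the bigrading, and the $sl_2$-stability computation $[L_1,c(j)]=(2\Delta_c-j-2)c(j+1)+(L_1c)(j)$ are all correct (though extending \Cref{lemsl2} to the sectors $C^{r\alpha}$ with $r\geq2$ is itself only asserted, not proved in the paper or by you). But the heart of the theorem -- that a vector $v$ with $c(j)v=0$ for all $c\in C$, $j\geq0$, must lie in $W$ -- is not established by your final step, and you say so yourself. Concretely: (1) your leading-term argument presupposes a PBW-type \emph{basis} of normally ordered $\phi$-monomials together with a monomial order compatible with the action of $\phi_M(j)$; \Cref{thm:genc} and \Cref{prop:basisnew} only give a spanning set of such monomials and a basis built from the operators $\Phi^\circ(n,m)(-i)$, so no such ordered basis is available without further work; (2) commuting $\phi_M(j)$ to the right produces, besides the self-OPE term, cross-terms from the OPEs $\phi_M(z)\phi_m(w)$ with $m\geq1$, $m\neq M$, which you do not control; (3) the nonvanishing $\phi_M(4M-3)\phi_M\neq0$ is verified only at $M=1$ (the displayed OPE of $\phi_1$) and is an infinite family of claims, not one finite computation. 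Since you explicitly defer exactly these points as ``where the real content lies,'' the proposal stops short of a proof.

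The paper avoids this combinatorics entirely and the contrast is instructive. It invokes \cite[Theorem 7.2]{Kaw15}, by which $W$ is precisely the set of $v\in V_{A_1}$ annihilated by all nonnegative modes of $W^\circ$, i.e.\ by all $e^\varpi(p)$ with $p\geq0$. The only thing to prove is then \Cref{annihilator}: using the Borcherds identity, every vector $e^\varpi(r-1/2)e^\varpi(p)v$ with $p\geq0$ is rewritten as a combination of $(e^\varpi(s-1/2)e^\varpi)(q)v$ with $q\geq0$, and the states $e^\varpi(s-1/2)e^\varpi$ are elements of $C=W^\circ\cap V_{A_1}$. Hence if $v\in\com C{V_{A_1}}$ these all vanish, and since $e^\varpi(r-1/2)$ does not kill a nonzero vector for $r$ sufficiently small, one gets $e^\varpi(p)v=0$ for all $p\geq0$ and thus $v\in W$ -- no $sl_2$-decomposition, monomial order, or nonvanishing of self-OPEs of the $\phi_M$ is needed. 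To salvage your route you would have to either supply the ordered-basis and cancellation analysis rigorously (including $\phi_M(4M-3)\phi_M\neq0$ for all $M$), or, more efficiently, replace your last step by an argument of the paper's type reducing annihilation by $C$ to annihilation by $e^\varpi(p)$, $p\geq0$.
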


Since we clearly have $\com C{V_{A_1}}\supset W$,
we just need to show the opposite inclusion.
We use the following: by \cite[Theorem 7.2]{Kaw15}, the invariant subspace of $W^\circ$
inside $V_{A_1}$ coincides with $W$:
\begin{equation}\label{invw}
W=\{v\in V_{A_1}\,|\,u(n)v=0\ (u\in W^\circ, n\geq 0)\}.
\end{equation}
(Note that $u(n)v=0$ if $n\not\in\ZZ$, $u\in W^\circ$ and $v\in W$. Moreover, $u(n)v$ does not necessarily belong to $V_{A_1}$.)
Then the following lemma is enough for the proof.

\begin{lemma}\label{annihilator}
For any $v\in V_{A_1}, r\in\ZZ$ and $p\in\ZZ_{\geq0}$,
the vector $e^\varpi(r-1/2)e^\varpi(p)v$ is realized
as a sum of vectors of the form $(e^\varpi(s-1/2)e^\varpi)(q)v$
with $s\in\ZZ$ and $q\in\ZZ_{\geq0}$.
\end{lemma}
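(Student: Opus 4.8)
The plan is to reduce $e^\varpi(r-1/2)e^\varpi(p)v$ to vectors of the stated form by using the associativity formula \eqref{associativity}, read ``backwards''. The key observation is that $e^\varpi$ has charge $\varpi$ and $\Delta(\varpi,\varpi)=1/2+\ZZ$, so the product $e^\varpi(s-1/2)e^\varpi$ makes sense as an element of $V_{A_1^\circ}$ for every $s\in\ZZ$; moreover, since $e^\varpi\cdot\bm1$ has charge $\varpi$ and $e^\varpi(s-1/2)e^\varpi\in V_{A_1}$ (two half-integer shifts combine to an integer charge $2\varpi=\alpha$), the operator $(e^\varpi(s-1/2)e^\varpi)(q)$ acting on $v\in V_{A_1}$ lands in $V_{A_1}$ and the mode index $q$ ranges over $\ZZ$ since $\Delta(\alpha,A_1)=\ZZ$. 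I would apply \eqref{associativity} with $b=e^\varpi$, $c=e^\varpi$, $d=v$, $n=s-1/2$ (for suitable $s$), $k=q$, which expresses $(e^\varpi(s-1/2)e^\varpi)(q)v$ as a linear combination of terms $e^\varpi(s-1/2-j)e^\varpi(q+j)v$ and $e^\varpi(s-1/2+q-j)e^\varpi(j)v$ with $j\ge 0$. The second family of terms already has its right-hand $e^\varpi$-mode equal to a non-negative integer $j$, matching the target shape, so the strategy is to set up a triangular system: choosing $s$ and $q$ appropriately in the identity, the term $e^\varpi(r-1/2)e^\varpi(p)v$ appears with nonzero coefficient, while all other terms are either of the already-good form (right mode a non-negative integer) or can be handled by downward induction.

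Concretely, I would induct on $p\ge 0$, and for fixed $p$, on the quantity $r$ (or on some combined weight). The base is $p$ large relative to $v$ where $e^\varpi(p)v=0$ for weight reasons, so the statement is vacuous. For the inductive step, I would take the associativity identity with the $b(n)c$-mode and $c(k)d$-mode chosen so that the ``diagonal'' term on the right-hand side is exactly $e^\varpi(r-1/2)e^\varpi(p)v$ (this fixes $s-1/2-j$ and $q+j$ for $j=0$, i.e.\ $s-1/2=r-1/2$ and $q=p$, hence $s=r$, $q=p$). Then
$$
(e^\varpi(r-1/2)e^\varpi)(p)v
= \binom{r-1/2}{0}^{-1}\!\Bigl[\text{good terms}\Bigr]
+ e^\varpi(r-1/2)e^\varpi(p)v,
$$
so solving for $e^\varpi(r-1/2)e^\varpi(p)v$ expresses it as $(e^\varpi(r-1/2)e^\varpi)(p)v$ minus a sum of terms $e^\varpi(r-1/2-j)e^\varpi(p+j)v$ with $j\ge 1$ (these have strictly larger right-mode index $p+j$, handled by the induction on $p$) and terms $e^\varpi(r-1/2+p-j)e^\varpi(j)v$ with $j\ge 0$ (these are already of the desired form). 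One must double-check the binomial coefficients $\binom{r-1/2}{j}$ and $\eta(\varpi,\varpi)e^{\pi\sqrt{-1}(r-1/2)}$ are the correct nonzero scalars, but this is routine.

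The main obstacle I anticipate is making the induction well-founded: the ``bad'' terms $e^\varpi(r-1/2-j)e^\varpi(p+j)v$ for $j\ge 1$ have larger right-hand mode $p+j$, which is good for an induction that decreases on $p$ running \emph{downward} from $0$ — but $p$ is bounded above only by the weight of $v$, so I should phrase the induction as downward on $p$ starting from the top weight where $e^\varpi(p)v=0$. One subtlety is that $e^\varpi(r-1/2-j)e^\varpi(p+j)v$ may not lie in $V_{A_1}$ as an intermediate expression even though the full sum does; to stay inside $V_{A_1}$ one should work instead with the combined operators $(e^\varpi(\cdot-1/2)e^\varpi)(\cdot)$ throughout, or simply accept that the identity \eqref{associativity} is an identity in $V_{A_1^\circ}$ and that the final answer, being a sum of things manifestly in $V_{A_1}$, is automatically in $V_{A_1}$. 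With that care, the induction closes and the lemma follows; combined with \eqref{invw} it gives $\com C{V_{A_1}}\subset W$, completing the proof of \Cref{mainthm2}.
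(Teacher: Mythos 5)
There is a genuine gap, and it sits exactly where you flagged the ``already-good'' terms. In the associativity formula with $b=c=e^\varpi$, $d=v$, $n=r-1/2$, $k=p$, the second family consists of the vectors $e^\varpi(r-1/2+p-j)\,e^\varpi(j)v$ with $j\ge 0$. These are \emph{iterated} modes, i.e.\ they have the same shape as the vector you are trying to decompose (the lemma's input $e^\varpi(\cdot-1/2)e^\varpi(\cdot)v$), not the target shape $(e^\varpi(s-1/2)e^\varpi)(q)v$, which is a single mode of the composite state. Since all terms keep the total index $r+p$ fixed, these second-family terms are $v_{N,j}:=e^\varpi(N-j-1/2)e^\varpi(j)v$ with $N=r+p$ and $j$ ranging over \emph{all} of $\Znn$, in particular $j<p$. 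Your downward induction on $p$ (from $p=\Delta_v-1$ to $p=0$) therefore does not close: at each step you reintroduce unknowns with strictly smaller second index, which the induction hypothesis does not cover. Worse, even the step of ``solving for the diagonal term'' can fail: the coefficient of $v_{r,p}$ in your identity is $1-(-1)^{r+p}\binom{r-1/2}{p}$ (here $\eta(\varpi,\varpi)e^{\pi\sqrt{-1}(r-1/2)}=(-1)^r$), which vanishes for instance when $p=0$ and $r$ is even, in which case the first and second families coincide term by term and the relation carries no information about $v_{r,0}$.

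The paper closes exactly this hole by not restricting to the associativity formula ($m=0$ in Borcherds) but using the full identity \eqref{borcherds} with a free parameter $m\in\Znn$: it introduces $\rho_{m,k;n}=\sum_{j=0}^m\binom mj(e^\varpi(n-1/2+j)e^\varpi)(m+k-j)$, which is manifestly a sum of target-shaped operators when $k\ge 0$, and whose mode-side expansion has first-family right modes $k+j$ and second-family right modes $m+j$. Choosing $m$ and $k$ close to $\Delta_v$ (namely $m=\Delta_v-p+1$, $k=\Delta_v-p$ in the step for $v_{N,\Delta_v-p}$) kills all second-family terms except ones with right mode $\ge\Delta_v-p+1$, already handled, and makes the system genuinely triangular with diagonal coefficient $1$. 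So your overall architecture (fix the weight, downward induction in the right mode, invert a triangular system) matches the paper, but without the extra parameter $m$ the triangularity claim is false and the argument as written does not go through; you would need either to prove invertibility of the full (non-triangular) linear system your relations produce, or to pass to the shifted operators $\rho_{m,k;n}$ as the paper does.
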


We  first prove \Cref{mainthm2} by using the lemma.

\begin{proof}[Proof of \Cref{mainthm2}]
We show $\com C{V_{A_1}}\subset W$.
Let $v$ be an element of $\com C{V_{A_1}}$.
By \Cref{annihilator}, we have $e^\varpi(r-1/2)e^\varpi(p)v=0$
for any $r\in\ZZ$ and $p\in\Znn$.
It then follows that for each $p\in\Znn$, we have
$e^\varpi(p)v=0$ since otherwise, for sufficiently small $r\in\ZZ$,
we have $e^\varpi(r-1/2)(e^\varpi(p)v)\neq0$.
It implies that $v$ belongs to the invariant subspace of $W^\circ$ inside $V_{A_1}$ by associativity \eqref{associativity}.
Thus,  the assertion follows from \eqref{invw}.
\end{proof}

We now prepare to show \Cref{annihilator}.
We consider the operator
$$
\rho_{m,k;n}=\sum_{j= 0}^m\binom mj(e^\varpi(n-1/2+j)e^\varpi)(m+k-j)\in \mathrm{End}(V_{A_1}) \quad (m,k\in\Znn, n\in\ZZ).
$$
Note that $\rho_{m,k;n}v$ is a sum of vectors of the form
$(e^\varpi(s-1/2)e^\varpi)(q)v$ with $s\in\ZZ$ and $q\geq 0$,
 since $m+k-j\geq 0$ when $j$ runs over $0,1,\ldots,m$.

By Borcherds identity \eqref{borcherds}, we have
\begin{equation}\label{borchtemp}
\rho_{m,k;n}=\sum_{j\geq 0}(-1)^j\binom{n-1/2}j
\{e^\varpi(m+n-j-1/2)e^\varpi(k+j)-(-1)^n
e^\varpi(n+k-j-1/2)e^\varpi(m+j)\}.
\end{equation}

\begin{proof}[Proof of \Cref{annihilator}]
We may assume that  $v$ is a homogeneous vector of $V_{A_1}$
of conformal weight $\Delta_v\geq 1$.
In this case, we have
$e^\varpi(p)v=0$ if $p\geq \Delta_v$ 
and $e^\varpi(r-1/2)e^\varpi(p)v=0$ if $r+p\geq \Delta_v$,
since  $V_{A_1^\circ}$ is non-negatively graded.
Therefore, we may only consider the vectors of the form
$$
v_{N,p}=e^\varpi(N-p-1/2)e^\varpi(p)v\quad  (0\leq p< \Delta_v,
N<\Delta_v).
$$
We fix an integer $N<\Delta_v$.
It suffices to show that $v_{N,p}$ 
($0\leq p<\Delta_v$) are sums of vectors of the form $\rho_{m,k;n}v$ with $m,k\in\Znn, n\in\ZZ$.
We show it inductively from $p=\Delta_v-1$ to $p=0$.

(Step 1) We have
$v_{N,\Delta_v-1}=\rho_{\Delta_v,\Delta_v-1,N-2\Delta_v+1}v$
by using \eqref{borchtemp} and $e^\varpi(\Delta_v)v=0$.

(Step 2) We have
\begin{align*}
&\rho_{\Delta_v-1,\Delta_v-2,N-2\Delta_v+3}v
=e^\varpi(N-\Delta_v+3/2)e^\varpi(\Delta_v-2)v\\
&\quad -(-1)^{N-2\Delta_v+3}e^\varpi(N-\Delta_v+1/2)e^\varpi(\Delta_v-1)v\\
&\quad-(N-2\Delta_v+3-1/2)e^\varpi(N-\Delta_v+1/2)e^\varpi(\Delta_v-1)v\\
&\quad \in v_{N,\Delta_v-2}+\CC v_{N,\Delta_v-1}.
\end{align*}
By using (Step 1),   we see that $v_{N,\Delta_v-2}$ is a sum of 
$\rho_{\Delta_v-1,\Delta_v-2,N-2\Delta_v+3}v$
and $\rho_{\Delta_v,\Delta_v-1,N-2\Delta_v+1}v$.

In this way, in (Step $p$) $(p=3,\ldots,\Delta_v-1)$, we consider 
$\rho_{\Delta_v-p+1,\Delta_v-p,N-2\Delta_v+2p-1}v$ and show that $v_{N,\Delta_v-p}$
is a sum of $\rho_{\Delta_v-i+1,\Delta_v-i,N-2\Delta_v+2i-1}v$
for $i=1,2,\ldots,p-1$.
Thus, we have proved the assertion.
\end{proof}

\section{Zhu's Poisson algebra and the associated variety of $C$}
\label{secrems}

In this Section, we determine Zhu's Poisson algebras of $C$.
%

Let $V$ be a vertex algebra.
The Zhu's Poisson algebra of $V$ is the vector space $R_V=V/V(-2)V$ with 
the product $[u][v]=[u(-1)v]$ and the Poisson bracket
$\{[u],[v]\}=[u(0)v]$ ($u,v\in V$).
Here, $V(-2)V=\cspan\{u(-2)v\in V\,|\,u,v\in V\}$ and $[u]=u+V(-2)V\in R_V$.
A vertex algebra $V$ is called $C_2$-cofinite if 
$R_V$ is finite-dimensional \cite{ZhuMod96}.

The lattice VOAs are well known to be $C_2$-cofinite.
The principal subalgebra $W$ is $C_2$-cofinite
since $R_W\cong \CC[x]/(x^2)$.

We now determine Zhu's Poisson algebra $R_C$ of $C$.

\begin{theorem}
{\rm (i)} The  following vectors form a basis of $R_C$:
\begin{equation}\label{eqn:c2alg}
[\bm1],[\phi_0],[\phi_1],[\phi_2],\ldots.
\end{equation}
{\rm (ii)} The vector $[\bm1]$ is the identity element and  $[\phi_n]\cdot[\phi_m]=0$ for any $n,m\geq0$. The Poisson bracket $\{\cdot,\cdot\}$ is trivial.
\end{theorem}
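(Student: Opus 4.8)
The plan is to compute Zhu's Poisson algebra $R_C$ one charge component at a time, using the bigrading. Two routine structural observations set things up. First, since $C$ is a vertex subalgebra, $[H,a(n)]=(\Delta_a-n-1)a(n)$ and charges add under $n$-th products, so $C(-2)C$ is spanned by bihomogeneous vectors; hence $C(-2)C$ is a bigraded subspace and $R_C=\bigoplus_{r\ge0}R_C^{r\alpha}$ is bigraded by charge. Second, from $(\partial a)(m)=-m\,a(m-1)$ (a consequence of translation covariance) one gets, iterating, that every $a(-k)b$ with $a,b\in C$ and $k\ge2$ already lies in $C(-2)C$; thus $C(-2)C=\cspan\{a(n)b\st a,b\in C,\ n\le -2\}$.

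I would then dispose of the charges $0$ and $\alpha$ directly. In charge $0$: the only charge-zero vectors of $C$ are the scalar multiples of $\bm 1$ (immediate from the basis $\cC$ in \eqref{basisc}), and any $a(n)b$ of charge $0$ with $n\le -2$ has $a,b\in\CC\bm1$, hence vanishes; so $R_C^0=\CC[\bm 1]$ and $[\bm 1]\ne0$. In charge $\alpha$: an $a(n)b$ of charge $\alpha$ with $n\le -2$ forces one factor to be a multiple of $\bm 1$, so it is either $0$ or $a(n)\bm 1=\tfrac1{(-n-1)!}\partial^{-n-1}a\in\partial C^\alpha$; thus $(C(-2)C)^\alpha=\partial C^\alpha$. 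Now \Cref{prop:basisc1} gives the basis $\{\partial^n\phi_m\st n,m\ge0\}$ of $C^\alpha$, and $\partial$ is injective on $C^\alpha$ by \Cref{lemsl2}, so $\partial C^\alpha$ has basis $\{\partial^n\phi_m\st n\ge1,\ m\ge0\}$ and therefore $R_C^\alpha=C^\alpha/\partial C^\alpha$ has basis $\{[\phi_m]\st m\ge0\}$.

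The crux is that $R_C^{r\alpha}=0$ for $r\ge2$, and this I would read straight off \Cref{prop:basisnew}: each basis vector of $C^{r\alpha}$ is of the form
\[
\Phi^\circ(n_{2r-1},n_{2r}+r-1)(-r)\,w,
\]
where the leading two-argument vector $\Phi^\circ(n_{2r-1},n_{2r}+r-1)$ lies in $\cB^\circ$ (because $n_{2r}+r-1\ge n_{2r-1}$), hence in $W^\circ\cap V_{A_1}=C$, and $w$ is obtained by applying modes of the generators $\Phi^\circ(n,m)$ ($m\ge n\ge0$) of $C$ to $\bm 1$, hence again lies in $C$. As $r\ge2$ the mode $-r$ is $\le -2$, so every such vector lies in $C(-2)C$; thus $C^{r\alpha}\subseteq C(-2)C$ and $R_C^{r\alpha}=0$. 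Combined with the previous paragraph, $R_C=\CC[\bm 1]\oplus\bigoplus_{m\ge0}\CC[\phi_m]$ with $[\bm 1]$ of charge $0$ and $\{[\phi_m]\}$ a basis of $R_C^\alpha$, which is exactly statement~(i) (the vectors \eqref{eqn:c2alg}).

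For statement (ii) I would simply observe the following. The class $[\bm 1]$ is the identity since $\bm 1(-1)=\id$. For all $n,m\ge0$, charge additivity puts $\phi_n(-1)\phi_m$ and $\phi_n(0)\phi_m$ in $C^{2\alpha}\subseteq C(-2)C$, so $[\phi_n]\cdot[\phi_m]=[\phi_n(-1)\phi_m]=0$ and $\{[\phi_n],[\phi_m]\}=[\phi_n(0)\phi_m]=0$; since also $\{[\bm 1],\blank\}=0$ (because $\bm 1(0)=0$), bilinearity forces the Poisson bracket to vanish identically. I do not expect a serious obstacle: the key propositions \Cref{prop:basisnew}, \Cref{prop:basisc1} and \Cref{lemsl2} carry the load, and the only mildly delicate points are the two bookkeeping facts in the first paragraph together with the verification that the operators appearing in \Cref{prop:basisnew} genuinely act from $C$ on $C$, both of which are routine.
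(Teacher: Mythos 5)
Your proposal is correct and follows essentially the same route as the paper: it uses \Cref{prop:basisnew} to place $\bigoplus_{r\ge2}C^{r\alpha}$ inside $C(-2)C$, \Cref{prop:basisc1} to identify $C^\alpha/\partial C^\alpha$, and charge additivity for the vanishing of products and brackets. The only difference is that you spell out details the paper leaves implicit (the bigrading of $C(-2)C$, the standard fact that $a(-k)b\in C(-2)C$ for $k\ge 2$, and the charge-$0$ and charge-$\alpha$ bookkeeping), which is fine.
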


\begin{proof}
By \Cref{prop:basisnew}, we see that the vector space $C^{(\geq 2)\alpha}:=\bigoplus_{r\geq 2}C^{r\alpha}$ is included in $C(-2)C$.
Since $C^\alpha(n)C^\alpha\subset C^{2\alpha}$ for any $n\in\ZZ$, 
it follows that $[\phi_n]\cdot[\phi_m]=0$ and $\{[\phi_n],[\phi_m]\}=0$ for any $n,m\geq 0$.
That $[\bm1]$ is the identity is well known.

We now show (i).
Since $C=\bigoplus_{r\geq0}C^{r\alpha}$ is nonnegatively graded by charges, we have
$$
C(-2)C=\partial C^\alpha\oplus C^{(\geq 2)\alpha}.
$$
By \Cref{prop:basisc1}, we see that the following vectors form a basis of $\partial C^\alpha$:
$$
\partial^n \phi_m\quad (n\geq 1,m\geq 0).
$$
Thus, \eqref{eqn:c2alg} form a basis of $R_C$.
\end{proof}

\begin{corollary}\label{corrc}
Zhu's Poisson algebra $R_C$ of $C$ is isomorphic to
$$
\CC[x_1,x_2,\ldots]/(x_ix_j\,|\,i,j=1,2,\ldots),
$$
with the trivial Poisson bracket.
\end{corollary}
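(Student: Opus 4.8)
The plan is to read off the isomorphism directly from the preceding Theorem, so that the Corollary becomes a matter of repackaging. First I would define an algebra homomorphism $\psi\colon \CC[x_1,x_2,\ldots]\to R_C$ by declaring $\psi(x_i)=[\phi_{i-1}]$ for $i\geq 1$ (and $\psi(1)=[\bm 1]$), extended multiplicatively. Part (ii) of the Theorem gives $\psi(x_ix_j)=[\phi_{i-1}]\cdot[\phi_{j-1}]=0$ for all $i,j\geq 1$, so the ideal $(x_ix_j\mid i,j\geq 1)$ is contained in $\ker\psi$; hence $\psi$ descends to an algebra homomorphism
$$
\bar\psi\colon \CC[x_1,x_2,\ldots]\big/(x_ix_j\mid i,j=1,2,\ldots)\longrightarrow R_C .
$$

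Next I would pin down a basis of the source. The defining ideal $(x_ix_j\mid i,j\geq 1)$ is, as a vector space, exactly the span of all monomials of degree $\geq 2$ in the $x_i$; consequently the classes of $1$ and of the $x_i$ ($i\geq 1$) form a $\CC$-basis of the quotient algebra. Under $\bar\psi$ this basis is sent to the set $\{[\bm 1],[\phi_0],[\phi_1],[\phi_2],\ldots\}$, which is a basis of $R_C$ by part (i) of the Theorem. Thus $\bar\psi$ carries a basis bijectively onto a basis and is therefore an isomorphism of commutative, associative, unital algebras.

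Finally I would match the Poisson structures. On $R_C$ the bracket vanishes identically by part (ii) of the Theorem, and the target quotient ring is, by definition of the statement, equipped with the trivial Poisson bracket (equivalently, one transports the bracket along $\bar\psi$ and observes it is zero). Hence $\bar\psi$ is an isomorphism of Poisson algebras, which is precisely the assertion of \Cref{corrc}.

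As for the main obstacle: there is essentially none left, since all the substantive content — that the classes $[\phi_n]$ together with $[\bm 1]$ form a basis of $R_C$ on which both the product of any two $[\phi_n]$ and the Poisson bracket vanish — is exactly the Theorem and is already in hand. The only mildly nontrivial point is the elementary linear-algebra claim that $\{1\}\cup\{x_i\}_{i\geq 1}$ is a basis of $\CC[x_1,x_2,\ldots]/(x_ix_j\mid i,j\geq 1)$, and this follows immediately from the remark that the defining ideal is spanned precisely by the monomials of degree at least two.
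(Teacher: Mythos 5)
Your argument is correct and is essentially the same reasoning the paper relies on: \Cref{corrc} is treated there as an immediate repackaging of the preceding Theorem (basis $[\bm 1],[\phi_0],[\phi_1],\ldots$ with vanishing products and bracket), which is exactly what your explicit map $x_i\mapsto[\phi_{i-1}]$ encodes. Nothing is missing.
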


\begin{corollary}\label{corxc}
Zhu's Poisson algebra $R_C$ of $C$ is local. 
Equivalently,
the associated variety $X_C=\mathrm{Specm}(R_C)$ consists of  a point. 
\end{corollary}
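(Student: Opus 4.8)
The plan is to read everything off the explicit presentation of $R_C$ obtained in \Cref{corrc}. Set $A=\CC[x_1,x_2,\ldots]/(x_ix_j\mid i,j\geq1)$, so that $R_C\cong A$, and let $\mathfrak{m}\subset A$ be the ideal generated by the images of the variables $x_i$; as a vector space $A=\CC\,1\oplus\mathfrak{m}$, with $\mathfrak{m}$ spanned by the classes of the $x_i$. The crucial point is that the defining relations force $\mathfrak{m}^2=0$: every monomial of degree $\geq2$ in the $x_i$ has been set to zero. In particular $\mathfrak{m}$ is a nil (indeed square-zero) ideal, and $A/\mathfrak{m}\cong\CC$ is a field, so $\mathfrak{m}$ is a maximal ideal.

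First I would establish that $\mathfrak{m}$ is the \emph{only} maximal ideal. Since every element of $\mathfrak{m}$ is nilpotent, $\mathfrak{m}$ is contained in the nilradical of $A$; conversely $A/\mathfrak{m}\cong\CC$ is reduced, so the nilradical of $A$ equals $\mathfrak{m}$. As the nilradical is contained in every prime — in particular every maximal — ideal, and $\mathfrak{m}$ is itself maximal, $\mathfrak{m}$ must be the unique maximal ideal. Equivalently, every $a\in A\setminus\mathfrak{m}$ is a unit: writing $a=c\,1+n$ with $c\in\CC^\times$ and $n\in\mathfrak{m}$, one has $a^{-1}=c^{-1}1-c^{-2}n$ because $n^2=0$. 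Hence $R_C\cong A$ is a local ring. Translating to geometry, $X_C=\mathrm{Specm}(R_C)$ consists of the single point $\mathfrak{m}$, which is the assertion of the corollary.

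There is no genuine obstacle in this argument: it is a formal consequence of the square-zero augmentation ideal together with the isomorphism of \Cref{corrc}. The only point worth flagging is that $A$ has infinitely many variables, so one should avoid any reasoning that invokes finite-dimensionality of $A$; but the square-zero property of $\mathfrak{m}$ makes the inverse of $c\,1+n$ an honest closed-form expression, and the nilradical argument for uniqueness of the maximal ideal is insensitive to the number of generators.
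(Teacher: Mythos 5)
Your proof is correct and is essentially the paper's argument: both rest on the square-zero property of the augmentation ideal (the span of the $[\phi_n]$, i.e.\ your $\mathfrak{m}$) together with the explicit inverse $(1+u)^{-1}=1-u$, which is exactly the computation $(1-u)(1+u)=1-u^2=1$ in the paper's proof. The only cosmetic differences are that you work through the presentation of \Cref{corrc} rather than the basis $\{[\phi_n]\}$ directly, and you add an (equivalent) nilradical remark for uniqueness of the maximal ideal.
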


\begin{proof}
Since $I=\cspan\{[\phi_n]\,|\,n\geq 0\}$ is a maximal ideal,
it suffices to show that if $J$ is an ideal containing an element of the form
$v=1+u$ with $u\in I$,
then $1\in J$.
But we have $J\ni (1-u)v=1-u^2=1$, as desired.
\end{proof}

In the rest of the paper, we make some concluding remarks.
Recall that by \cite{CalLM}, the graded algebra
$\mathrm{gr}\,W$ of $W$ with respect to Li's filtration \cite{L05} on $W$ is
isomorphic to the jet lift of $R_W\cong \CC[x]/(x^2)$:
$$
\mathrm{gr}\,W\cong J_\infty(R_W),\quad J_\infty(R_W):=\CC[x,\partial x,\partial^2x,\ldots]/(x^2)_\partial,
$$
where $(x^2)_\partial$ is the differential ideal generated by $x^2$.
Note that there are always surjections from the jet sides to the graded sides \cite{L05}.

Regarding $C$, we remark that $\mathrm{gr}\,C$ is not isomorphic to $J_\infty(R_C)$. Here, we set
$$
J_\infty(R_C)=\CC[x_1,\partial x_1,\partial^2x_1,
\ldots,x_i,\partial x_i,\partial x_i^2,\ldots]/(x_ix_j\,|\,i,j=1,2,\ldots)_\partial.
$$
This is proved as follows.
By \cite{L} (see also \cite{AraKS}), we see that $J_\infty(R_C)$ is isomorphic to the graded algebra of the principal subalgebra 
$U=\langle e^{\beta_0},e^{\beta_1},\ldots\rangle$ of the lattice vertex algebra $V_L$ associated to the lattice 
$L=\bigoplus_{n=0}^\infty\ZZ \beta_n$
of infinite rank with the bilinear form defined by
$$
(\beta_i,\beta_j)=
\begin{cases}
2&(i=j)\\
1&(i\neq j)
\end{cases}.
$$
Therefore, we have the induced surjection $\mathrm{gr}\,U\twoheadrightarrow \mathrm{gr}\,C$.
Let us consider the gradation of $U$ with respect to the hamiltonian $H_U$ define by $H_U(e^{\beta_i})=(2i+1)e^{\beta_i}$,
to make the surjection a graded homomorphism.
Then  by using the combinatorial basis of $U$, we see that the character of $U$ with respect to $H_U$ has the form
$
\ch(U)(q)=\mathrm{Tr}_U\,q^{H_U}=1+q+q^2+2q^3+3q^4+5q^5+\cdots.
$ 
It does not coincide with the character of $C$: $\ch(C)(q)=1+q+q^2+2q^3+3q^4+4q^5+\cdots$.
Hence, $\mathrm{gr}\,C\not\cong J_\infty(R_C)$.

\begin{remark}\label{remeh}
It is shown in \cite{vEH} that the algebra $J_\infty (R_W)\cong \mathrm{gr}\,W$ is isomorphic to $\mathrm{gr}\,L(-22/5,0)$.
By considering the character correspondence (\Cref{remchar}), we 
may expect that $\mathrm{gr}\,C$ is  isomorphic to $\mathrm{gr}\,L(-3/5,0)$. However, it is not since 
 $\mathrm{gr}\,C$ is non-finitely generated whereas 
$\mathrm{gr}\,L(-3/5,0)$ is generated by one element.
\end{remark}

\begin{remark} The combinatorial basis $\cB^\circ$ of $W^\circ$
clearly implies that the quotient space 
$$ 
W^\circ/W^\circ(\leq -2)W^\circ
$$
of $W^\circ$ is finite-dimensional, 
where we set
$$
W^\circ(\leq -2)W^\circ=\cspan\{ u(n)v\,|\,n\in\frac12\ZZ, n\leq -2, u,v\in W^\circ\}.
$$
Therefore, it may be a natural question if $W^\circ$ gives an answer to the question in \Cref{remeh}, namely, if $W^\circ$  is related to the abelian intertwining algebra $L(-3/5,0)\oplus L(-3/5,3/4)$ under some filtration.
\end{remark}

\begin{remark}
Let $L=\ZZ\beta$ be a rank one lattice such that $2L$ is an integral lattice. Consider the subalgebra $U=\langle e^\beta\rangle \cap V_{2L}$.
Then $U$ has a similar set of minimal generators to that of $C$
and we can show this in a similar way as in this paper.
In particular, $W\cap V_{2A_1}$ is a non-finitely generated subalgebra of $V_{A_1}$.
There is another obvious way of generalization of the results of this paper that to consider the vertex algebra parts of generalized principal subspaces.
We hope to come back to these points in future works.
\end{remark}

\end{document}